\setlist[itemize]{topsep=0pt,after=\vspace{1.5\baselineskip}}
\setlist[itemize]{noitemsep, topsep=0pt}
\def\R{\mathbb R}
\def\R{\mathbb R}  
\def\TM{T_{max}} 
\def
\newtheorem{theorem}{Theorem}[section]
\newtheorem{lemma}[theorem]{Lemma}
\newtheorem{remark}{Remark}
\title[Boundedness in an attraction-repulsion chemotaxis system with consumption] 
      {
Boundedness in a chemotaxis system with consumed chemoattractant and produced chemorepellent}
\author[S. Frassu and G. Viglialoro]{}
\subjclass[2010]{Primary: 35A01, 35K55, 35Q92. Secondary:  92C17.}
\keywords{Chemotaxis, Global existence, Boundedness, Nonlinear production. \\
	\textit{$^*$Corresponding author}: giuseppe.viglialoro@unica.it}
\begin{document}
\maketitle

\centerline{\scshape Silvia Frassu \and Giuseppe Viglialoro$^{*}$}
\medskip
{
 \medskip
 \centerline{Dipartimento di Matematica e Informatica}
 \centerline{Universit\`{a} di Cagliari}
 \centerline{Via Ospedale 72, 09124. Cagliari (Italy)}
 \medskip
}
\bigskip
\begin{abstract}
We study this zero-flux attraction-repulsion chemotaxis model,  with linear and superlinear production $g$ for the chemorepellent and sublinear rate $f$ for the chemoattractant:
\begin{equation}\label{problem_abstract}
\tag{$\Diamond$}
\begin{cases}
u_t= \Delta u - \chi \nabla \cdot (u \nabla v)+\xi \nabla \cdot (u \nabla w)  & \text{ in } \Omega \times (0,T_{max}),\\
v_t=\Delta v-f(u)v  & \text{ in } \Omega \times (0,T_{max}),\\
0= \Delta w - \delta w + g(u)& \text{ in } \Omega \times (0,T_{max}).
\end{cases}
\end{equation}
In this problem, $\Omega$ is a bounded and smooth domain of $\R^n$, for $n\geq 1$, $\chi,\xi,\delta>0$, $f(u)$ and $g(u)$ reasonably regular functions generalizing the prototypes $f(u)=K u^\alpha$ and $g(u)=\gamma u^l$, with $K,\gamma>0$ and proper $ \alpha, l>0$. Once it is indicated that any sufficiently smooth $u(x,0)=u_0(x)\geq 0$ and $v(x,0)=v_0(x)\geq 0$ produce a unique classical and nonnegative solution $(u,v,w)$ to \eqref{problem_abstract}, which is defined in $\Omega \times (0,T_{max})$,  we establish that for any such $(u_0,v_0)$, the life span $\TM=\infty$ and $u, v$ and $w$ are uniformly bounded in $\Omega\times (0,\infty)$, (i) for $l=1$, $n\in \{1,2\}$, $\alpha\in (0,\frac{1}{2}+\frac{1}{n})\cap (0,1)$ and any $\xi>0$, (ii) for $l=1$, $n\geq 3$, $\alpha\in (0,\frac{1}{2}+\frac{1}{n})$ and $\xi$ larger than a quantity depending on 
$\chi \lVert v_0 \rVert_{L^\infty(\Omega)}$, (iii) for $l>1$ any $\xi>0$, and in any dimensional settings. Finally, an indicative analysis about the effect by logistic and repulsive actions on chemotactic phenomena is proposed by comparing the results herein derived for the linear production case with those in \cite{LankeitWangConsumptLogistic}. 
\end{abstract}
\section{Presentation of the model}\label{IntroSection}
This article is dedicated to the following Cauchy boundary problem
\begin{equation}\label{problem}
\begin{cases}
u_t= \Delta u - \chi \nabla \cdot (u \nabla v)+\xi \nabla \cdot (u \nabla w)  & \text{ in } \Omega \times (0,T_{max}),\\
v_t=\Delta v-f(u)v  & \text{ in } \Omega \times (0,T_{max}),\\
0= \Delta w - \delta w + g(u)& \text{ in } \Omega \times (0,T_{max}),\\
u_{\nu}=v_{\nu}=w_{\nu}=0 & \text{ on } \partial \Omega \times (0,T_{max}),\\
u(x,0)=u_0(x), \; v(x,0)=v_0(x) & x \in \bar\Omega,
\end{cases}
\end{equation}
defined  in a bounded and smooth domain $\Omega$ of $\R^n$, with $n\geq 1$, $\chi, \xi, \delta>0$ and some  functions $f=f(s)$ and $g=g(s)$, sufficiently regular in their argument $s\geq 0$, and further regular initial data  $u_0(x)\geq 0$ and $v_0(x)\geq 0.$  Additionally,  the subscript $\nu$ in $(\cdot)_\nu$ indicates the outward normal derivative on $\partial \Omega$, whereas $T_{max}$ the maximum time up to which solutions to the system are defined. 

The consideration of model \eqref{problem} comes, essentially, from a natural coupling of two widely studied chemotaxis systems: the classical Keller--Segel model (\cite{K-S-1970,Keller-1971-MC,Keller-1971-TBC}) idealizing aggregation phenomena in situations where certain cells (populations, organisms) are attracted by a signal they themselves absorb, and a repulsive counterpart, where the same cells are repelled in response to another substance emitted by them. More precisely, if $u=u(x,t)$ is used to denote the population density of these cells at the 
position $x$ and at the time $t$, and $v=v(x,t)$ and $w=w(x,t)$ stand, respectively,  for the concentration of the attractive and repulsive chemical signals (chemoattractant and chemorepellent), problem \eqref{problem} indicates that: 
(a) the motion of the cells, inside an insulated domain (zero-flux on the border) and initially distributed according to the law of $u_0$, results from the competition between the aggregation/repulsion impact from the cross terms $\chi u \nabla v/\xi u \nabla w$ (increasing for larger sizes of $\chi$ and $\xi$) and the diffusion of the cells (the Laplacian $\Delta u$); (b) the initial signal $v_0$ is spread, $w$ diffuses as well but $v$ (second equation in \eqref{problem}) is consumed with a rate $f(u)$ whereas $w$ (third equation) is proliferated with rate $g(u)$; (c) consumption and production are higher the more the cell density increases. 

Purely intuitive considerations (but below we will give precise references) suggest that this interplay between the factors taking part in model \eqref{problem} might lead to very different situations for the aforementioned cellular movement: from global stabilization and convergence to equilibrium of the cell distribution $u$, to the so-called \textit{chemotactic collapse}, the mechanism resulting in aggregation processes for $u$, eventually blowing up/exploding at finite time. Mathematically, in the first case, solutions $(u,v,w)$ are defined and bounded for all $(x,t)$ in $\Omega \times (0,\infty)$, in the other a finite time $\TM$ exists and $(u,v,w)$ ceases to exist for larger value of 
$\TM$; in particular the component of the solution associated to the particle density becomes unbounded approaching $\TM$, with emergencies of $\delta$-formations. In this research we will derive criteria on the data involved in the initial-boundary value problem \eqref{problem} ensuring that the life span $\TM$ of its solutions is infinity and that, moreover, they are as well bounded. 
\section{Some known results. Claim of the main theorems}
\subsection{A view on the state of the art} 
In the framework of classical Keller--Segel models, as mentioned above, \eqref{problem} is a combination of the signal-production 
\begin{equation}\label{problemOriginalKS}
u_t= \Delta u - \chi  \nabla \cdot (u \nabla v) \quad \textrm{and} \quad 
v_t=\Delta v-v+u,  \quad  \text{ in } \Omega \times (0,T_{max}),
\end{equation}
and signal-absorption 
\begin{equation}\label{problemOriginalKSCosnumption}
u_t= \Delta u - \chi \nabla \cdot (u \nabla v)  \quad \textrm{and} \quad 
v_t=\Delta v-u v,  \quad  \text{ in } \Omega \times (0,T_{max}),
\end{equation}
chemotaxis systems. Even though the equation for $u$ is the same, it is conceivable that the resulting evolution of each initial boundary value problem related to \eqref{problemOriginalKS} and \eqref{problemOriginalKSCosnumption}, must differ from the other, even for same fixed $\chi>0$ and initial data $u_0$ and $v_0$. This is essentially justified by the observation that $v$ increases with $u$ in problem \eqref{problemOriginalKS}, whereas it decreases in \eqref{problemOriginalKSCosnumption}. Let us present some more details concerning this discussion; in particular,  since we will focus on questions tied to classical solutions, in order to better establish our aims, we select only these references, among some others. 
\begin{enumerate}[label=\roman*)]
\item  \label{item1Intro}For problem \eqref{problemOriginalKS}, the production of $v$ may break the natural homogenization process of the cells, especially in terms of the size of $\chi$ related to the aggregation impact, the initial mass of the particle distribution, i.e.,  $m=\int_\Omega u_0(x)dx,$ and the space dimension. Indeed, if in the one-dimensional setting blow-up phenomena are excluded  (see \cite{OsYagUnidim}), in higher dimensions if $m \chi$  surpasses a certain critical value $m_\chi$, the system might present the aforementioned chemotactic collapse, whereas for $m\chi<m_\chi$ no instability appears in the motion of the cells. There are many contributions dedicated to understanding this scenario. In this regard, in \cite{HerreroVelazquez,JaLu,Nagai,WinklAggre} (and references therein cited), the interested reader can find pointers to the rich literature dealing with the existence and properties of global, uniformly bounded or blow-up (local) solutions to the Cauchy problem associated to \eqref{problemOriginalKS}. On the other hand, as far as nonlinear segregation chemotaxis models like those we are considering, when in problem \eqref{problemOriginalKS}  the production $g(u)=u$ is replaced by $g(u)\cong u^l$, with  $0<l<\frac{2}{n}$ ($n\geq1$), uniform boundedness of all its solutions is proved in \cite{LiuTaoFullyParNonlinearProd}. Moreover, by resorting to a simplified parabolic-elliptic version in spatially radial contexts, when the second equation is reduced to $0=\Delta v-\mu(t)+g(u)$, with $g(u)\cong u^l$ and $\mu(t)=\frac{1}{|\Omega|}\int_\Omega g(u(\cdot,t))$, it is known  (see \cite{WinklerNoNLinearanalysisSublinearProduction}) that  the same conclusion on the boundedness continues to be valid for any $n\geq 1$ and $0<l<\frac{2}{n}$, whereas for $l>\frac{2}{n}$ blow-up phenomena may occur.
\item  \label{item2Intro} Conversely to what was discussed for model \eqref{problemOriginalKS}, when the chemical $v$ responsible for gathering processes of the cells is consumed throughout the time, so far no result detecting unbounded solutions to the corresponding initial boundary-value problem to \eqref{problemOriginalKSCosnumption} is available. Such a question seems quite hard to solve, and this does not appear surprising;  indeed, from comparison arguments, the second equation for the chemical immediately ensures uniform boundedness of $v$. Despite that, such a bound by itself is not enough to ensure that classical solutions $(u,v)$ to \eqref{problemOriginalKSCosnumption} emanating from any sufficiently regular initial data $(u_0,v_0) $ are uniformly bounded. Precisely, this holds true in two-dimensional settings (as a combination of the results in \cite{WinklerN-Sto_CPDE}  and \cite{WinklerN-Sto_2d}, where a more general coupled chemotaxis-fluid model is studied) and for $n\geq 3$, provided this smallness assumption is satisfied (\cite{TaoBoun}): $\chi \lVert v_0\lVert_{L^\infty(\Omega)}\leq \frac{1}{6(n+1)}.$ Nevertheless, this condition does not exclude the possibility that solutions emanating from initial data, not satisfying it, may collapse in finite time. Despite that, a way to prevent blow-up of solutions to problem \eqref{problemOriginalKSCosnumption} even for values of $\chi \lVert v_0\lVert_{L^\infty(\Omega)}$ larger than $\frac{1}{6(n+1)}$, is considering logistic sources with strong dampening effect in the equation of the cells, precisely reading
\begin{equation}\label{EquationWithLogistic}
u_t= \Delta u - \chi \nabla \cdot (u \nabla v) +ku-\mu u^2, \quad  \text{ in } \Omega \times (0,T_{max}),\quad k,\mu>0.
\end{equation}
In \cite{LankeitWangConsumptLogistic} it is indeed shown that the resulting Cauchy problem admits classical bounded solutions for arbitrarily large  $\chi \lVert v_0\lVert_{L^\infty(\Omega)}$ provided $\mu$ is also larger than a certain expression depending in an increasing way  on the same $\chi \lVert v_0\lVert_{L^\infty(\Omega)}$.
\end{enumerate}
As far as we know,  a general $n$-dimensional analysis tied to the attraction-repulsion chemotaxis system in the form of  \eqref{problem}, has not been developed yet. Conversely, for $f(u)=g(u)=u$, a fully parabolic attraction-repulsion Stokes system is addressed for the two-dimensional case in \cite{liu2020stabilization}: here, inter alia, boundedness of classical solutions is achieved for any initial data. 
In addition, for model \eqref{problem} where the chemoattractant and chemorepellent are both produced it has been proposed as well in the fully parabolic version in \cite{Luca2003Alzheimer}, for one-dimensional settings and linear proliferation, to describe the aggregation of microglia observed in Alzheimer's disease. In particular, for any $n>1$, the attraction-repulsion system \eqref{problem} with second and third equations replaced by
\begin{equation*}
0= \Delta v -\beta  v +f(u)  \quad \textrm{and} \quad 
0=\Delta w-\delta w+g(u) ,  \quad  \text{ in } \Omega \times (0,T_{max}),
\quad \beta, \delta>0,
\end{equation*}
the following is known in the literature. For linear growths of the chemoattractant and the chemorepellent, $f(u)=\alpha u$, $\alpha>0$, and $g(u)=\gamma u$, $\gamma>0$, 
we have that the value $\xi\gamma-\chi\alpha$, measuring in some sense the difference between the repulsion and attraction contributions, is critical: if $\xi\gamma-\chi\alpha>0$ (repulsion prevails over attraction) all solutions to the model are globally bounded, whereas for $\xi\gamma-\chi\alpha<0$ (attraction prevails over repulsion) unbounded solutions can be constructed: see \cite{GuoJiangZhengAttr-Rep,LI-LiAttrRepuls,TaoWanM3ASAttrRep,VIGLIALORO-JMAA-BlowUp-Attr-Rep,YUGUOZHENG-Attr-Repul} for some details on the issue. On the other hand, for more general production laws, respectively $f$ and $g$ generalizing the prototypes $f(u)=\alpha u^s$, $s>0,$ and $g(u)=\gamma u^r$, $r\geq 1$, we are only aware of the following recent result (\cite{ViglialoroMatNacAttr-Repul}): for every  $\alpha,\beta,\gamma,\delta,\chi>0$,  and $r>s\geq 1$ (resp. $s>r\geq 1$), there exists $\xi^*>0$ (resp. $\xi_*>0$) such that if $\xi>\xi^*$ (resp. $\xi\geq \xi_*$), any sufficiently regular initial datum $u_0(x)\geq 0$ (resp. $u_0(x)\geq 0$ enjoying some smallness assumptions) produces a unique classical and bounded solution. In addition the same conclusion holds true for every  $\alpha,\beta,\gamma,\delta,\chi,\xi>0$, $0<s<1$, $r=1$ and  any sufficiently regular $u_0(x)\geq 0$.
\subsection{Motivations and presentation of the Theorems}
In accordance to what has been discussed above, especially in items  \ref{item1Intro}  and \ref{item2Intro}, we wish to contribute to the analysis of attraction-repulsion Keller--Segel systems by giving answers to questions concerning system \eqref{problem}, to our knowledge, not yet studied. In this sense, we aim at essentially establishing the roles of the chemoattractant and chemorepellent on the motion of the particle density, whose kinetics are not influenced by any smoothing logistic term. Specifically, we will give sufficient conditions on the data of model \eqref{problem} such that the joint actions of the consumed chemoattractant and the produced chemorepellent suffice to provide global and bounded solutions in terms of, or independently of, smallness constraints on $\chi \lVert v_0\lVert_{L^\infty(\Omega)}$. To this scope, these assumptions are fixed
\begin{equation}\label{f}
f,g \in C^1(\R) \quad \textrm{with} \quad   0\leq f(s)\leq Ks^{\alpha}  \textrm{ and } \gamma s^l\leq g(s)\leq \gamma s(s+1)^{l-1},\quad  \textrm{for some}\quad K,\gamma,\alpha>0, l\geq 1 \quad \textrm{and all } s \geq 0,
\end{equation}
and the following results are shown.
\begin{theorem}\label{MainTheorem}
Let $\Omega$ be a smooth and bounded domain of $\mathbb{R}^n$, with $n\geq 1$, and $\chi, \delta$ positive. Moreover, for some $K,\gamma>0$, let $f$ and $g$ fulfill \eqref{f}, respectively with $\alpha \in \left(0, \frac{1}{2}+\frac{1}{n}\right)\cap (0,1)$ and $l=1$. Then there exists $C(n)>0$ such that for any initial data $(u_0,v_0)\in C^0(\bar{\Omega})\times C^{1}(\bar\Omega)$, with $u_0, v_0\geq 0$ on $\bar{\Omega}$, any $\xi>0$ and $n\in\{1,2\}$, or $\xi > {C}(n) \|\chi v_0\|_{L^{\infty}(\Omega)}^\frac{4}{n}$ and $n\geq 3$, 
problem \eqref{problem} admits a unique global and uniformly bounded classical solution.  
\end{theorem}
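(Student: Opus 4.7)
My plan is the standard three-stage strategy for chemotaxis problems of this type. First, a local-in-time existence result obtained by a fixed-point argument together with parabolic Schauder and elliptic $W^{2,p}$ theory produces a unique maximal classical solution $(u,v,w)$ on $[0,T_{max})$ with the usual extensibility criterion. Two preliminary bounds come for free: the maximum principle applied to the $v$-equation (since $f(u) \geq 0$) yields $0 \leq v(\cdot,t) \leq \|v_0\|_{L^\infty(\Omega)}$, and the zero-flux integration of the $u$-equation gives the mass conservation $\|u(\cdot,t)\|_{L^1(\Omega)} = \|u_0\|_{L^1(\Omega)}$.

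The core step is a uniform $L^p$-bound on $u$ for some $p > n/2$. Testing the $u$-equation against $u^{p-1}$ and integrating by parts produces
\begin{equation*}
\frac{1}{p}\frac{d}{dt}\int_\Omega u^p + \frac{4(p-1)}{p^2}\int_\Omega |\nabla u^{p/2}|^2 = -\frac{\chi(p-1)}{p}\int_\Omega u^p \Delta v + \frac{\xi(p-1)}{p}\int_\Omega u^p \Delta w.
\end{equation*}
The condition $l=1$ in \eqref{f} forces $g(u) = \gamma u$, so the third equation of \eqref{problem} rewrites $\int_\Omega u^p \Delta w = \delta \int_\Omega u^p w - \gamma \int_\Omega u^{p+1}$, contributing the essential absorbing term $-\frac{\xi\gamma(p-1)}{p}\int u^{p+1}$ on the left. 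For the attractive piece I would substitute $-\Delta v = -v_t - f(u)v$: the pointwise part is controlled by $\chi K \|v_0\|_{L^\infty(\Omega)} \int u^{p+\alpha}$ (using $f(u) \leq K u^\alpha$ and $v \leq \|v_0\|_{L^\infty(\Omega)}$), while the $-\chi \int u^p v_t$ piece is absorbed by passing to the augmented functional $y(t) = \int u^p + \chi \int u^p v$, expanding the emerging $\int u^{p-1} u_t v$ via the $u$-equation, and reabsorbing all cross-gradient terms into $\|\nabla u^{p/2}\|_{L^2(\Omega)}^2$ by Cauchy--Schwarz. The residual $\int u^p w$ is handled by elliptic $L^q$-regularity ($\|w\|_{L^{p+1}(\Omega)} \lesssim \|u\|_{L^{p+1}(\Omega)}$) combined with Young's inequality.

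Since $\alpha < 1$, Young's inequality absorbs $\chi K \|v_0\|_{L^\infty(\Omega)} \int u^{p+\alpha}$ into the repulsive reservoir $\xi\gamma \int u^{p+1}$ up to a constant. After a Gagliardo--Nirenberg interpolation (whose critical Sobolev exponent carries the factor $4/n$ when $n \geq 3$), the estimate collapses to a differential inequality of the form
\begin{equation*}
y'(t) + c_1 \int_\Omega |\nabla u^{p/2}|^2 + \bigl(\xi\gamma c_2 - c_3 \chi \|v_0\|_{L^\infty(\Omega)}\bigr) \int_\Omega u^{p+1} \leq C.
\end{equation*}
In dimensions $n \in \{1,2\}$ the GN interpolation is sub-critical and the coefficient of $\int u^{p+1}$ is positive for every $\xi > 0$, whereas for $n \geq 3$ the GN exponents force that positivity to hold exactly when $\xi > C(n) \|\chi v_0\|_{L^\infty(\Omega)}^{4/n}$. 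Once $y$ is uniformly bounded, $\|u\|_{L^p(\Omega)}$ is controlled for every $p < \infty$, and a standard Moser iteration, or a heat-semigroup Duhamel argument using $\|\nabla v\|_{L^\infty}$ and $\|\nabla w\|_{L^\infty}$ bounds from parabolic and elliptic regularity, upgrades to $\|u\|_{L^\infty(\Omega)}$, forcing $T_{max} = \infty$ via the extensibility criterion. The hardest step will be the careful bookkeeping in the energy estimate, especially the handling of $\int u^p v_t$ through the auxiliary functional $y$ and the precise Gagliardo--Nirenberg computation that produces the sharp Sobolev exponent $4/n$ on $\|\chi v_0\|_{L^\infty(\Omega)}$.
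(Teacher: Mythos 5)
There is a genuine gap at the heart of your energy estimate: you never actually dispose of the term $\int_\Omega u^p|\nabla v|^2$ (equivalently, after Young, of $\int_\Omega|\nabla v|^{2(p+1)}$), and this is precisely the obstruction that in the pure consumption system forces the smallness condition $\chi\|v_0\|_{L^\infty(\Omega)}\leq\frac{1}{6(n+1)}$ of Tao. Your device of passing to $y(t)=\int_\Omega u^p+\chi\int_\Omega u^p v$ to cancel $\int_\Omega u^p v_t$ only shifts the problem: expanding $\frac{d}{dt}\int_\Omega u^p v$ via the $u$-equation regenerates terms of the form $\int_\Omega u^{p-1}v\,\nabla u\cdot\nabla v$ and $\int_\Omega u^{p}\,|\nabla v|^{2}$, and Cauchy--Schwarz on the former yields $\epsilon\int_\Omega u^{p-2}|\nabla u|^2+C_\epsilon\int_\Omega u^p|\nabla v|^2$, so the offending quantity survives; it cannot be ``reabsorbed into $\|\nabla u^{p/2}\|_{L^2(\Omega)}^2$.'' The paper's proof instead couples $\int_\Omega u^p$ with $(\chi^2/\gamma)^p\int_\Omega|\nabla v|^{2p}$: the evolution of the second functional produces the dissipation $p\int_\Omega|\nabla v|^{2p-2}|D^2v|^2$, the interpolation inequality $\|\nabla v\|_{L^{2p+2}(\Omega)}^{2p+2}\leq 2(4p^2+n)\|v\|_{L^\infty(\Omega)}^2\|\,|\nabla v|^{p-1}D^2v\|_{L^2(\Omega)}^2$ converts $\int_\Omega|\nabla v|^{2(p+1)}$ into a multiple of that dissipation weighted by $\|v_0\|_{L^\infty(\Omega)}^2$, and the repulsion term $-\xi\gamma\frac{p-1}{p}\int_\Omega u^{p+1}$ absorbs the $\int_\Omega u^{p+1}$ produced by Young. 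The condition on $\xi$ is exactly the requirement that the coefficient so obtained, which carries a factor $\xi^{-p}\|\chi v_0\|_{L^\infty(\Omega)}^2$, stay below $p/4$; taking $p$ near $n/2$ gives the exponent $4/n$. Your displayed inequality, with coefficient $\xi\gamma c_2-c_3\chi\|v_0\|_{L^\infty(\Omega)}$, would yield a threshold \emph{linear} in $\chi\|v_0\|_{L^\infty(\Omega)}$ and is structurally incompatible with the stated condition $\xi>C(n)\|\chi v_0\|_{L^\infty(\Omega)}^{4/n}$; the appeal to Gagliardo--Nirenberg to ``force'' that exponent does not follow from anything you wrote.

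Two secondary omissions: the hypothesis $\alpha<\frac12+\frac1n$ plays no role in your argument (you only use $\alpha<1$), whereas in the paper it is exactly what yields, via semigroup estimates, the time-uniform bound $\int_\Omega|\nabla v|^2\leq c_0$ needed both for the boundary term $\int_{\partial\Omega}|\nabla v|^{2p-2}(|\nabla v|^2)_\nu$ in non-convex domains and for the final Gagliardo--Nirenberg step closing the ODE $y'\leq c-cy^{1/\theta}$; and the sign observation that $-\int_\Omega u^pf(u)v\leq 0$ is correct but irrelevant, since that is not where the difficulty lies.
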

\begin{theorem}\label{Main1Theorem}
Let $\Omega$ be a  smooth and bounded domain of $\mathbb{R}^n$, with $n\geq 1$, and $\chi,\delta$ positive. Moreover, for some $K,\gamma>0$, let $f$ and $g$ fulfill \eqref{f}, respectively with $\alpha \in \left(0, \frac{1}{2}+\frac{1}{n}\right)\cap (0,1)$ and $l>1$. Then for any $\xi>0$ and any initial data $(u_0,v_0)\in C^0(\bar{\Omega})\times C^{1}(\bar\Omega)$, with $u_0, v_0\geq 0$ on $\bar{\Omega}$,
problem \eqref{problem} admits a unique global and uniformly bounded classical solution.  
\end{theorem}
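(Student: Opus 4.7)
The plan is to establish, uniformly in $t\in(0,\TM)$, that $\|u(\cdot,t)\|_{L^p(\Omega)}$ stays bounded for every $p\ge 1$, to upgrade this to an $L^\infty$-bound, and to conclude $\TM=\infty$ together with global boundedness of $(u,v,w)$ via the standard extensibility criterion $\TM<\infty \Rightarrow \limsup_{t\nearrow\TM}\|u(\cdot,t)\|_{L^\infty(\Omega)}=\infty$. The decisive novelty of the regime $l>1$ is that integration by parts against the elliptic equation for $w$ yields a strictly super-linear absorption term $-\int_\Omega u^{p+l}$, which is strong enough to dominate every harmful lower-order quantity \emph{without any smallness assumption on $\xi$}. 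As preliminaries I would use local existence of a classical solution via standard parabolic theory, mass conservation $\int_\Omega u(\cdot,t)=\int_\Omega u_0$ from the no-flux boundary condition, and the maximum-principle bound $\|v(\cdot,t)\|_{L^\infty(\Omega)}\le V_0:=\|v_0\|_{L^\infty(\Omega)}$, valid because $f\ge 0$ in the second equation of \eqref{problem}.

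Fix $p>1$ and test the first equation of \eqref{problem} with $u^{p-1}$. Integrating by parts in both cross terms, substituting $\Delta w=\delta w-g(u)$ from the third equation, and using $g(u)\ge \gamma u^l$ gives
\[
\frac{1}{p}\frac{d}{dt}\int_\Omega u^p+(p-1)\int_\Omega u^{p-2}|\nabla u|^2 \le \chi(p-1)\int_\Omega u^{p-1}\nabla u\cdot\nabla v+\frac{\xi\delta(p-1)}{p}\int_\Omega u^p w-\frac{\xi\gamma(p-1)}{p}\int_\Omega u^{p+l}.
\]
The term $\int_\Omega u^p w$ is absorbed into a fraction of $\int_\Omega u^{p+l}$: elliptic $L^q$-regularity applied to $0=\Delta w-\delta w+g(u)$, combined with the bound $g(u)\le \gamma u(u+1)^{l-1}$ from \eqref{f} and a Sobolev embedding, controls $\|w(\cdot,t)\|_{L^\infty(\Omega)}$ by $\|u(\cdot,t)\|_{L^s}^l+1$ for suitable $s$, so an inductive bootstrap on $p$ starting from mass conservation, followed by a Young step, absorbs it. For the chemoattractant cross term I would use Young's inequality
\[
\chi(p-1)\int_\Omega u^{p-1}\nabla u\cdot\nabla v \le \frac{p-1}{2}\int_\Omega u^{p-2}|\nabla u|^2+\frac{\chi^2(p-1)}{2}\int_\Omega u^p|\nabla v|^2,
\]
and then bound the last integral by H\"older as $\|u(\cdot,t)\|_{L^{p+l}(\Omega)}^{p}\,\|\nabla v(\cdot,t)\|_{L^{2(p+l)/l}(\Omega)}^{2}$.

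The technical heart of the proof, and the step I expect to be the main obstacle, is the uniform-in-time estimate of $\|\nabla v(\cdot,t)\|_{L^{2(p+l)/l}(\Omega)}$: the second equation $v_t=\Delta v-f(u)v$ carries no intrinsic dissipation for $\nabla v$, so I would exploit smoothing properties of the Neumann heat semigroup applied to the source $f(u)v$, together with $v\le V_0$ and $f(u)\le Ku^\alpha$, to convert the already-available $L^r$-control of $u$ into the required $L^s$-control of $\nabla v$; it is this conversion that forces $\alpha\in(0,\frac12+\frac1n)\cap(0,1)$, while $l>1$ plays no further role here. A final Young step then collapses everything to an ODI of the form $\frac{d}{dt}\int_\Omega u^p+c_1\int_\Omega u^{p+l}\le c_2$, whence $\sup_{t\in(0,\TM)}\|u(\cdot,t)\|_{L^p(\Omega)}<\infty$ for every $p\ge 1$. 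Choosing $p>\frac{n}{2}$ and running Moser-type iteration (or semigroup smoothing, with the chemotactic drifts treated as lower-order perturbations) yields $\|u\|_{L^\infty(\Omega\times(0,\TM))}<\infty$; the extensibility criterion then forces $\TM=\infty$, and the uniform bounds on $v$ (already noted) and on $w$ (elliptic regularity) follow at once.
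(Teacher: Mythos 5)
Your overall architecture (uniform $L^p$ bound $\Rightarrow$ $L^\infty$ $\Rightarrow$ globality via the extensibility criterion, with the superlinear absorption $-\int_\Omega u^{p+l}$ doing the heavy lifting) matches the paper, but the two steps you delegate to auxiliary estimates are exactly where your argument breaks down. First, the cross term: after Young you need to control $\int_\Omega u^p|\nabla v|^2$ by $\varepsilon\int_\Omega u^{p+l}$ plus a quantity involving $\|\nabla v(\cdot,t)\|_{L^{2(p+l)/l}(\Omega)}$, and you propose to bound the latter uniformly by Neumann heat-semigroup smoothing from ``already-available'' $L^r$ control of $u$. But the only unconditional information at that stage is the mass \eqref{massConservation}, which gives $f(u)\in L^{1/\alpha}(\Omega)$ uniformly, and under $\alpha<\frac12+\frac1n$ the semigroup estimate then yields at best $\nabla v\in L^q(\Omega)$ for $q\le 2$ (this is precisely the borderline computation in \eqref{Cg}); since $2(p+l)/l>2$ for every $p>0$, your bootstrap cannot even start, and the scheme is circular (you need $\nabla v\in L^q$ with $q>2$ to bound $u$ in $L^p$, and $u$ in some $L^{p'}$ with $p'>1$ to improve $\nabla v$). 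The paper escapes this by never seeking an a priori $L^q$ bound on $\nabla v$ for $q>2$: it couples $(\chi^2/\gamma)^p\int_\Omega|\nabla v|^{2p}$ into the functional $y(t)$, derives its evolution (Lemma \ref{Estim_general_For_nablav^2pLemma}) with the dissipation $\int_\Omega|\nabla v|^{2p-2}|D^2v|^2$ on the good side, and absorbs $\int_\Omega|\nabla v|^{2(p+1)}$ into that dissipation through \eqref{InequalityHessian} and the pointwise bound $v\le\|v_0\|_{L^\infty(\Omega)}$, while the $\frac{\xi\gamma(p-1)}{4}\int_\Omega u^{p+l}$ it produces is cancelled by the absorption term from the $u^p$ estimate. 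This coupled-functional mechanism is the essential idea your proposal lacks.

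Second, the term $\frac{\xi\delta(p-1)}{p}\int_\Omega u^p w$: your plan to bound $\|w(\cdot,t)\|_{L^\infty(\Omega)}$ by $\|u(\cdot,t)\|_{L^s(\Omega)}^l+1$ and bootstrap ``starting from mass conservation'' also fails at the base case when $l>1$, because even $\int_\Omega w=\delta^{-1}\int_\Omega g(u)\lesssim\int_\Omega u^l$ is not controlled by the mass, so elliptic regularity has no admissible starting datum. The paper instead uses the Ehrling-type inequality \eqref{EhrlingTypeInequalityWithMass} to split $\int_\Omega w^{\overline p+1}$ into $\sigma\int_\Omega u^{p+l}$ (absorbed by the superlinear sink) plus a multiple of $\left(\int_\Omega u^l\right)^{\frac{p+l}{l}}$, and the latter is interpolated by Gagliardo--Nirenberg against $\int_\Omega|\nabla u^{p/2}|^2$ with the mass as the low-order norm, hence absorbed into the diffusion. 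Until you replace your two semigroup/bootstrap steps with mechanisms of this kind (or prove the bootstrap closes, which it does not as written), the proof is not complete.
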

\begin{remark}\label{RemarkUnifBounCalsSol}
As usual in the nomenclature, in chemotaxis models a global and uniformly bounded classical solution to problem \eqref{problem} is a triplet  of nonnegative functions 
$(u,v,w)\in (C^0(\bar{\Omega}\times [0,\infty))\cap  C^{2,1}(\bar{\Omega}\times (0,\infty)))^3,$ such that for some $q>n$ and $C>0$ this relation holds:
	\begin{equation*}
	\lVert u(\cdot,t) \rVert_{L^\infty(\Omega)}+\lVert v(\cdot,t) \rVert_{W^{1,q}(\Omega)} \leq C \quad \textrm{for all} \quad t\in(0,\infty).
	\end{equation*}
\end{remark}
The remaining part of the paper is structured as follows: In $\S$\ref{PreliminariesSection} some general and well-known preliminaries are given, whereas $\S$\ref{SectionLocalInTime} is focused on the existence of local classical solutions $(u,v,w)$ to problem \eqref{problem} defined in $\Omega\times (0,\TM)$. In particular, crucial properties of these solutions, and how to achieve their uniform-in-time boundedness from their $L^p$-boundedness, for some suitable $p>1$, is analyzed. Successively, in $\S$\ref{EstimatesAndProofSection}, we associate to the local solutions, the functional  $y (t):=\int_\Omega u^p+(\frac{\chi^2}{\gamma})^{p}\int_\Omega |\nabla v|^{2p}$, by means of which the desired uniform-in-time bound is proved; this will allow us to proof our results, also in the same $\S$\ref{EstimatesAndProofSection}. Finally, in $\S$\ref{SectionComparison} we compare \cite[Theorem 1.1]{LankeitWangConsumptLogistic} and Theorem \ref{MainTheorem} as to discuss the boundedness issue for chemotaxis-consumption models with different smoothing reactions: a logistic source and a produced chemorepellent. 
\section{Some preparatory tools}\label{PreliminariesSection}
In this section we summarize some  inequalities and further necessary results. 
\begin{lemma}\label{BoundsInequalityLemmaTecnicoYoung}  
	Let $A,B \geq 0$, $d_1,d_2>0$ and $p>1$. Then for some $d, d_3>0$
	we have 
	\begin{equation}\label{InequalityForFinallConclusion}
	A^{d_1}+B^{d_2}\geq 2^{-d}(A+B)^{d}-d_3, 
	\end{equation}
	and
	\begin{equation}\label{InequalityA+BToPowerP}
	(A+B)^p \leq 2^{p-1}(A^p+B^p).
	\end{equation}
	\begin{proof}
		The proofs can be found, respectively, in \cite[Lemma 
		3.3]{MarrasViglialoroMathNach} and  
		\cite[Theorem 1]{Jameson_2014Inequality}.
	\end{proof}
\end{lemma}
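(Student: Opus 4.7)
My plan is to treat the two inequalities separately, since each rests on a distinct elementary fact. For \eqref{InequalityA+BToPowerP}, the key observation is that $x \mapsto x^p$ is convex on $[0,\infty)$ for $p > 1$, so Jensen's inequality applied at the two-point mean gives $\bigl(\tfrac{A+B}{2}\bigr)^p \leq \tfrac{A^p+B^p}{2}$; multiplying through by $2^p$ yields the claimed bound $(A+B)^p \leq 2^{p-1}(A^p+B^p)$. Equivalently, this is the power-mean inequality for the uniformly weighted pair $(A,B)$, or can be obtained from H\"older's inequality with exponents $p$ and $p/(p-1)$ applied to $1 \cdot A + 1 \cdot B$.

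For \eqref{InequalityForFinallConclusion}, my plan is to collapse the two possibly different exponents $d_1, d_2$ to a single exponent and then exploit sub-additivity of concave power functions. Specifically, I would set $d := \min\{d_1, d_2, 1\}$, which lies in $(0,1]$ because of the hypothesis $d_1, d_2 > 0$. Since $d \leq 1$, the sub-additivity of $x \mapsto x^d$ on $[0,\infty)$ gives $(A+B)^d \leq A^d + B^d$. I would then bound each single-variable term by splitting on the unit threshold: if $A \geq 1$, then $A^d \leq A^{d_1}$ because $d \leq d_1$; if $A < 1$, then $A^d \leq 1$. Either way, $A^d \leq A^{d_1} + 1$, and analogously $B^d \leq B^{d_2} + 1$. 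Summing, $(A+B)^d \leq A^{d_1} + B^{d_2} + 2$, and since $2^{-d} \leq 1$ this rearranges to $2^{-d}(A+B)^d - 2 \leq A^{d_1} + B^{d_2}$, so the choice $d_3 := 2$ closes the argument.

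I do not anticipate any serious obstacle: both statements are standard algebraic facts with short and self-contained proofs. The only point of care is to verify that the constants $d$ and $d_3$ furnished by the above recipe depend only on $d_1, d_2, p$ and not on $A, B$, which is transparent from the construction. Accordingly, the references invoked in the statement serve merely to formalize these elementary computations in a convenient reusable form.
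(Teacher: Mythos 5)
Your proposal is correct, and it differs from the paper in that the paper offers no argument at all: it simply cites \cite[Lemma 3.3]{MarrasViglialoroMathNach} for \eqref{InequalityForFinallConclusion} and \cite[Theorem 1]{Jameson_2014Inequality} for \eqref{InequalityA+BToPowerP}. Your convexity (or H\"older) derivation of \eqref{InequalityA+BToPowerP} is the standard one and is unimpeachable. Your proof of \eqref{InequalityForFinallConclusion} is also sound: with $d=\min\{d_1,d_2,1\}$ the subadditivity of $x\mapsto x^d$ and the case split at the unit threshold give $(A+B)^d\leq A^{d_1}+B^{d_2}+2$, hence the claim with $d_3=2$. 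The only substantive difference worth flagging is quantitative: your construction caps $d$ at $1$, whereas the cited lemma presumably yields $d=\min\{d_1,d_2\}$, which can exceed $1$; in the paper's application (Lemma \ref{LemmaAbsorptiveMainInequality}, where $d_1=d_2=\frac{1}{\theta}>1$ and the conclusion is the ODE $y'\leq c_{32}-c_{33}y^{1/\theta}$) your version would produce the exponent $1$ instead of $\frac{1}{\theta}$. Since any positive exponent suffices for the ODE comparison that yields the uniform bound on $y$, and since the statement only asks for \emph{some} $d>0$, this weakening is harmless; your proof has the advantage of being short and self-contained where the paper outsources the work.
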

\begin{lemma}\label{InequalityTipoPoincarLemma} 
Let $\Omega$ be a bounded and smooth domain of $\R^n$, $n\geq 1$. For all $\psi\in C^{2}(\bar{\Omega})$, we have
\begin{equation}\label{InequalityLaplacian}
(\Delta \psi)^2 \leq n |D^2 \psi|^2,
\end{equation}
\begin{equation}\label{InequalityGradienHessian}
\lvert D^2\psi \nabla \psi \rvert^2\leq |D^2 \psi|^2\lvert \nabla \psi \rvert^2.
\end{equation}
If, further, $\psi$ satisfies $\psi_{\nu}=0$ on $\partial \Omega$, then for all $p> 1$ and $\eta>0$ one has
\begin{equation}\label{InequalityHessian}
\lVert  \nabla \psi\rVert^{2p+2}_{L^{2p+2}(\Omega)} \leq 2 (4p^2+n)\lVert \psi \rVert^2_{L^\infty(\bar{\Omega})} \lVert \vert \nabla \psi\vert^{p-1}  D^2 \psi\rVert^2_{L^2(\Omega)},
\end{equation}
where $D^2\psi$ represents the Hessian matrix of $\psi$ and $\vert D^2\psi \rvert^2=\sum\limits_{i,j=1}^{n}\psi_{x_ix_j}^2$, whereas for some positive constant $C_\eta$
\begin{equation}\label{NoConvexity}
\int_{\partial\Omega} |\nabla \psi|^{2p-2} (|\nabla \psi|^2)_{\nu} \leq \eta \int_\Omega |\nabla \psi|^{2p-4} |\nabla|\nabla \psi|^2|^2 + C_{\eta} 
\left(\int_\Omega |\nabla \psi|^2\right)^p.
\end{equation}
\begin{proof}
Regard the proof of inequalities \eqref{InequalityLaplacian} and \eqref{InequalityGradienHessian}, we refer the reader to \cite[Lemma 3.1]{MarrasViglialoroMathNach}.  As to \eqref{InequalityHessian}, this is a special case of \cite[Lemma 2.2]{LankeitWangConsumptLogistic}, and relation \eqref{NoConvexity} is derived in \cite[Lemma 2.1 c)]{LankeitWangConsumptLogistic}.
\end{proof}
\end{lemma}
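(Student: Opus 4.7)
The four inequalities are essentially unrelated to each other, so the plan is to treat them one at a time, with the main work concentrated in \eqref{InequalityHessian} and \eqref{NoConvexity}. The first two pointwise bounds are pure linear algebra. For \eqref{InequalityLaplacian}, I would write $\Delta\psi=\sum_{i=1}^{n}\psi_{x_i x_i}$ and apply Cauchy--Schwarz in $\R^n$ to the vector $(\psi_{x_1x_1},\ldots,\psi_{x_nx_n})$, obtaining $(\Delta\psi)^2\le n\sum_i\psi_{x_ix_i}^2\le n\sum_{i,j}\psi_{x_ix_j}^2=n|D^2\psi|^2$. For \eqref{InequalityGradienHessian}, the $i$-th component of $D^2\psi\nabla\psi$ is $\sum_j\psi_{x_ix_j}\psi_{x_j}$; Cauchy--Schwarz on each component and then summing over $i$ gives $|D^2\psi\nabla\psi|^2\le\bigl(\sum_{i,j}\psi_{x_ix_j}^2\bigr)|\nabla\psi|^2$.

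For \eqref{InequalityHessian} the strategy is integration by parts exploiting the Neumann condition. I would start from
\begin{equation*}
\int_\Omega |\nabla\psi|^{2p+2}=\int_\Omega |\nabla\psi|^{2p}\,\nabla\psi\cdot\nabla\psi
\end{equation*}
and integrate by parts on one copy of $\nabla\psi$, killing the boundary term by $\psi_\nu=0$. Using $\nabla(|\nabla\psi|^{2p})=2p|\nabla\psi|^{2p-2}D^2\psi\,\nabla\psi$, this yields
\begin{equation*}
\int_\Omega |\nabla\psi|^{2p+2}=-\int_\Omega \psi\bigl[2p|\nabla\psi|^{2p-2}(D^2\psi\nabla\psi)\cdot\nabla\psi+|\nabla\psi|^{2p}\Delta\psi\bigr].
\end{equation*}
I then bound the two terms by $\|\psi\|_{L^\infty}$ together with \eqref{InequalityGradienHessian} (first term) and \eqref{InequalityLaplacian} (second term), producing the pointwise estimate $(2p+\sqrt{n})|\nabla\psi|^{2p}|D^2\psi|$ for the absolute value of the integrand divided by $\|\psi\|_{L^\infty}$. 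Cauchy--Schwarz in the form $\int|\nabla\psi|^{2p}|D^2\psi|\le\bigl(\int|\nabla\psi|^{2p+2}\bigr)^{1/2}\bigl(\int|\nabla\psi|^{2p-2}|D^2\psi|^2\bigr)^{1/2}$ lets me divide through and square, giving the claim with constant $(2p+\sqrt{n})^2$, which is dominated by $2(4p^2+n)$ by Young's inequality.

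Inequality \eqref{NoConvexity} is the one I expect to be the hard part, because it mixes a boundary integral with the interior norms and the estimate has to be independent of a Neumann datum. My plan is the now-standard boundary-curvature argument: since the domain is smooth, one can extend the outer normal to a $C^2$ vector field on $\bar\Omega$, and a direct computation shows the pointwise bound $(|\nabla\psi|^2)_\nu \le c_\Omega|\nabla\psi|^2$ on $\partial\Omega$ whenever $\psi_\nu=0$, where $c_\Omega$ depends on the principal curvatures of $\partial\Omega$. This reduces the left-hand side to $c_\Omega\int_{\partial\Omega}|\nabla\psi|^{2p}$. From here, I would apply the trace embedding $W^{1,1}(\Omega)\hookrightarrow L^1(\partial\Omega)$ to $|\nabla\psi|^{2p}=(|\nabla\psi|^2)^p$, giving control by $\int_\Omega|\nabla\psi|^{2p}+\int_\Omega|\nabla(|\nabla\psi|^2)^p|$; the second integral equals $p\int_\Omega|\nabla\psi|^{2p-2}\bigl|\nabla|\nabla\psi|^2\bigr|$, and one final Young inequality with exponents $2$ and $2$, carefully chosen so that the absorbing constant becomes the free parameter $\eta$, converts the mixed term into $\eta\int_\Omega|\nabla\psi|^{2p-4}\bigl|\nabla|\nabla\psi|^2\bigr|^2$ plus a remainder controlled by $\int_\Omega|\nabla\psi|^{2p}$. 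An interpolation between $L^{2p}$ and $L^2$ in the flavor of Gagliardo--Nirenberg (absorbed again against the same gradient-squared term via Young) finally produces the pure $\bigl(\int_\Omega|\nabla\psi|^2\bigr)^p$ tail with a constant $C_\eta$ depending on $\eta$ and $\Omega$. The delicate bookkeeping here — making sure all the intermediate terms get absorbed into exactly the two quantities that appear on the right-hand side — is the main technical obstacle; otherwise the structure is completely standard and directly parallels the computations in the papers the author cites.
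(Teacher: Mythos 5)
Your proposal is correct, and it essentially reconstructs the arguments that the paper delegates entirely to its references: the two pointwise bounds by Cauchy--Schwarz, inequality \eqref{InequalityHessian} by integrating $\int_\Omega |\nabla\psi|^{2p}\nabla\psi\cdot\nabla\psi$ by parts and combining \eqref{InequalityLaplacian}--\eqref{InequalityGradienHessian} with $(2p+\sqrt{n})^2\le 2(4p^2+n)$, and \eqref{NoConvexity} via the curvature bound $(|\nabla\psi|^2)_\nu\le c_\Omega|\nabla\psi|^2$ on $\partial\Omega$ followed by a trace inequality, Young's inequality and a Gagliardo--Nirenberg/Ehrling absorption. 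The only cosmetic difference is that the cited source controls the boundary term through a fractional trace embedding $W^{r,2}(\Omega)\hookrightarrow L^2(\partial\Omega)$ applied to $|\nabla\psi|^p$, whereas you use $W^{1,1}(\Omega)\hookrightarrow L^1(\partial\Omega)$ applied to $|\nabla\psi|^{2p}$; both routes close the same way.
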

\begin{lemma}\label{EllipticEhrlingSystemLemma} 
Let $\Omega\subset \R^n$, $n\geq 1$, be a bounded and smooth domain and $\delta>0$. Then for any nonnegative $g\in C^1(\bar{\Omega})$, the solution $0\leq \psi\in C^{2,\kappa}(\bar{\Omega})$, $0<\kappa<1$, of the problem
\begin{equation*}
\begin{cases}
0=\Delta \psi+g-\delta \psi & \textrm{in } \Omega,\\
\psi_{\nu}=0 & \textrm{on } \partial \Omega,
\end{cases}
\end{equation*}
has the following property: For any $\hat{c},\sigma>0$ and  $\overline{p}\in(1,\infty)$, there exists $\tilde{c}=\tilde{c}(\sigma,\overline{p}) >0$ such that 
\begin{equation}\label{EhrlingTypeInequalityWithMass} 
\hat{c}\int_\Omega \psi^{\overline{p}+1} \leq \sigma  \int_\Omega g^{\overline{p}+1} +\frac{\tilde{c}}{|\Omega|^{\overline{p}}}\Big( \int_\Omega g\Big)^{\overline{p}+1}.\end{equation}
\end{lemma}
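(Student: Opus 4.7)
My plan is to obtain the Ehrling-type estimate by combining standard elliptic $L^{q}$-regularity for the Neumann problem $-\Delta\psi+\delta\psi=g$ with a Gagliardo--Nirenberg interpolation inequality that brings the $L^{1}$-norm into play.

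First, I note that Calder\'on--Zygmund-type estimates for the homogeneous Neumann problem $-\Delta\psi+\delta\psi=g$ yield a constant $C_{1}>0$ (depending on $\Omega$, $\delta$ and $\overline{p}$) such that $\|\psi\|_{W^{2,\overline{p}+1}(\Omega)}\le C_{1}\|g\|_{L^{\overline{p}+1}(\Omega)}$; meanwhile, integrating the equation over $\Omega$ and exploiting the Neumann condition gives the exact mass identity $\|\psi\|_{L^{1}(\Omega)}=\delta^{-1}\|g\|_{L^{1}(\Omega)}$. I then interpolate $L^{\overline{p}+1}$ between $W^{2,\overline{p}+1}$ and $L^{1}$ through the Gagliardo--Nirenberg inequality
$$\|\psi\|_{L^{\overline{p}+1}(\Omega)}\le C_{2}\,\|\psi\|_{W^{2,\overline{p}+1}(\Omega)}^{a}\,\|\psi\|_{L^{1}(\Omega)}^{1-a},$$
with $a=\frac{n\overline{p}}{n\overline{p}+2(\overline{p}+1)}\in(0,1)$ determined by the standard scaling relation. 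Substituting the two estimates above and raising the result to the $(\overline{p}+1)$-th power produces
$$\int_{\Omega}\psi^{\overline{p}+1}\le C_{3}\,\Bigl(\int_{\Omega}g^{\overline{p}+1}\Bigr)^{a}\,\Bigl(\int_{\Omega}g\Bigr)^{(1-a)(\overline{p}+1)}.$$

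A weighted Young inequality with conjugate exponents $1/a$ and $1/(1-a)$ then delivers, for any $\epsilon>0$, a constant $C_{\epsilon}>0$ such that
$$\int_{\Omega}\psi^{\overline{p}+1}\le \epsilon\int_{\Omega}g^{\overline{p}+1}+C_{\epsilon}\Bigl(\int_{\Omega}g\Bigr)^{\overline{p}+1}.$$
Choosing $\epsilon=\sigma/\hat{c}$ and multiplying by $\hat{c}$ produces the announced estimate after rewriting the coefficient of $\bigl(\int_{\Omega}g\bigr)^{\overline{p}+1}$ in the form $\tilde{c}/|\Omega|^{\overline{p}}$.

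The most delicate step, which is essentially bookkeeping, is isolating the explicit factor $|\Omega|^{-\overline{p}}$ so that $\tilde{c}$ captures only the dependence on $\sigma$ and $\overline{p}$ (beyond the fixed structural constants $\delta,\hat{c}$). This can be arranged either by invoking Gagliardo--Nirenberg in its scale-invariant form on $\mathbb{R}^{n}$ and restricting to $\Omega$, or, more concretely, by decomposing $\psi=\overline{\psi}+(\psi-\overline{\psi})$ with $\overline{\psi}=\frac{1}{|\Omega|}\int_{\Omega}\psi$: the constant part contributes exactly $\|\psi\|_{L^{1}}^{\overline{p}+1}/|\Omega|^{\overline{p}}$ to $\int_{\Omega}\psi^{\overline{p}+1}$, while the mean-zero remainder is controlled by Poincar\'e's inequality together with a further application of elliptic regularity. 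Apart from this domain-size bookkeeping, the argument is a routine combination of elliptic regularity, Gagliardo--Nirenberg interpolation and Young's inequality.
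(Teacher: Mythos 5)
Your argument is correct, but it is not the route the paper takes: the paper simply defers to the literature (Lemma 3.1 of the cited preprint of Viglialoro and Lemma 2.2 of Winkler's \emph{How far can chemotactic cross-diffusion enforce exceeding carrying capacities?}), where the standard proof combines the same two ingredients you start from --- elliptic regularity $\lVert \psi\rVert_{W^{2,\overline{p}+1}(\Omega)}\leq C\lVert g\rVert_{L^{\overline{p}+1}(\Omega)}$ and the mass identity $\delta\int_\Omega\psi=\int_\Omega g$ --- but then closes the gap with Ehrling's lemma, i.e.\ the compactness of $W^{2,\overline{p}+1}(\Omega)\hookrightarrow L^{\overline{p}+1}(\Omega)$ and a contradiction argument giving $\lVert\psi\rVert_{L^{\overline{p}+1}}\leq\eta\lVert\psi\rVert_{W^{2,\overline{p}+1}}+C_\eta\lVert\psi\rVert_{L^1}$ (hence the lemma's name). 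You instead make the interpolation quantitative via Gagliardo--Nirenberg with the exponent $a=\frac{n\overline{p}}{n\overline{p}+2(\overline{p}+1)}$, which I checked against the scaling relation $\frac{1}{\overline{p}+1}=a\left(\frac{1}{\overline{p}+1}-\frac{2}{n}\right)+(1-a)$ and is correct, and then conclude with Young's inequality. What your approach buys is an in-principle explicit constant where the compactness route gives none; what it costs is having to be careful that the bounded-domain form of Gagliardo--Nirenberg with the full $W^{2,\overline{p}+1}$-norm is the one being invoked (which you implicitly do, and which is fine). Two small bookkeeping remarks: the factor $|\Omega|^{-\overline{p}}$ in \eqref{EhrlingTypeInequalityWithMass} is purely a normalization that $\tilde{c}$ can absorb, so your closing discussion is more elaborate than needed; and, as your own construction $\epsilon=\sigma/\hat{c}$ shows, $\tilde{c}$ inevitably depends on $\hat{c}$ as well as on $\sigma$ and $\overline{p}$ --- this is a slight imprecision already present in the statement of the lemma, not a defect of your proof, and it is harmless since the lemma is only applied with specific fixed $\hat{c}$.
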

\begin{proof}
A detailed proof of \eqref{EhrlingTypeInequalityWithMass} can be found in \cite[Lemma 3.1]{ViglialoroMatNacAttr-Repul}. (See also \cite[Lemma 2.2]{WinklerHowFar}.)
\end{proof}
\section{Existence of local-in-time classical solutions. From uniform boundedness in $L^p(\Omega)$ to $L^\infty(\Omega).$}\label{SectionLocalInTime}
Let us dedicate ourselves to the existence question of classical solutions to system \eqref{problem}. It is shown that such solutions are at least local and,  additionally, satisfy some crucial estimates.
\begin{lemma}[\rm{Local existence}]\label{LocalExistenceLemma}   
Let $\Omega$ be a bounded and smooth domain of $\R^n$, with $n \geq 1$, $q>n$, $\chi,\delta>0$ and nontrivial
$(u_0,v_0)\in C^0(\bar{\Omega})\times C^{1}(\bar\Omega)$, with $u_0\geq 0$ and $v_0\geq 0$ on $\bar{\Omega}$.  Assume, moreover, that for some $\gamma, K>0$, $f$ and $g$ fulfill \eqref{f}, respectively with  $\alpha\in (0,\frac{1}{2}+\frac{1}{n})\cap (0,1)$ and $l\geq 1$. Then, for any $\xi>0$ there exist $\TM \in (0,\infty]$ and a unique triplet of nonnegative functions $(u,v,w)\in 
 (C^0(\bar{\Omega}\times [0,T_{max}))\cap  C^{2,1}(\bar{\Omega}\times (0,T_{max})))^3,$ such that this dichotomy criterion holds true:
\begin{equation}\label{ExtensibilityCrit}
 \textrm{either}\,\; T_{max}=\infty\; \textrm{or}\; \limsup_{t \rightarrow T_{max}}(\lVert u(\cdot,t)\rVert_{L^\infty(\Omega)}+\lVert v(\cdot,t)\rVert_{W^{1,q}(\Omega)}) =\infty. 
\end{equation} 
In addition, the $u$-component obeys the mass conservation property, i.e. 
\begin{equation}\label{massConservation}
	\int_\Omega u(x, t)dx =\int_\Omega u_0(x)dx=m>0\quad \textrm{for all }\, t \in (0,\TM),
\end{equation}
whilst for some $c_0>0$ the $v$-component is such that 
\begin{equation}\label{Cg}
0 \leq v\leq \lVert v_0\rVert_{L^\infty(\Omega)}\quad \textrm{in}\quad \Omega \times (0,T_{max})\quad \textrm{and}\quad 
\int_\Omega |\nabla v(\cdot, t)|^2\leq c_0 \quad \textrm{on } \,  (0,\TM).
\end{equation}
\begin{proof}
The local solvability  as well as the dichotomy criterion \eqref{ExtensibilityCrit} can be proved by adapting well-established approaches widely used in
the frame of classical chemotaxis models (see for instance \cite[Lemma 1.2]{CiesWink}, \cite[Theorem 3.1]{HorstWink} and \cite[Lemma 3.1]{TaoWanM3ASAttrRep}). Moreover, comparison arguments apply to yield $u, v, w\geq 0$ in $\Omega\times (0,\TM)$ and the first relation in \eqref{Cg}, whereas the mass conservation property follows by integrating over $\Omega$ the first equation of \eqref{problem}, in conjunction with the boundary and initial conditions.

Let us, finally, derive the last claim as follows. We separate the cases $0<\alpha\leq \frac{1}{2}$ and $ \frac{1}{2}<\alpha <\min\{\frac{1}{2}+\frac{1}{n},1\}$. For $0<\alpha\leq \frac{1}{2}$, from the second equation of \eqref{problem}, we have that an integration over $\Omega$, the Young inequality, the bound for $v$ given in \eqref{Cg} and the properties of $f$ in \eqref{f} lead to
\begin{equation}\label{AA}
\begin{split}
\frac{d}{dt}\int_\Omega \lvert \nabla v\rvert^2&=2\int_\Omega \nabla v \cdot \nabla (\Delta v -f(u)v)=-2\int_\Omega (\Delta v)^2+2\int_\Omega f(u)v \Delta v \\&
=-2 \int_\Omega (\Delta v)^2+2 \int_\Omega v(f(u)-1)\Delta v-2\int_\Omega  \lvert \nabla v\rvert^2  
 \leq - \int_\Omega (\Delta v)^2-2\int_\Omega  \lvert \nabla v\rvert^2 + \int_\Omega v^2(f(u)-1)^2\\ & 
\leq -2\int_\Omega  \lvert \nabla v\rvert^2 + \lVert v_0\rVert^2_{L^\infty(\Omega)}K^2\int_\Omega u^{2\alpha}+ 2\lVert v_0\rVert^2_{L^\infty(\Omega)}K \int_\Omega u^{\alpha}+ \lVert v_0\rVert^2_{L^\infty(\Omega)}|\Omega| \quad \textrm{on } (0,\TM).
\end{split}
\end{equation}
Now, since $L^1(\Omega) \subseteq L^{2\alpha}(\Omega)\subseteq L^{\alpha}(\Omega)$, thanks to the mass conservation property \eqref{massConservation} we can find $c_1>0$ such that 
$$ \lVert v_0\rVert^2_{L^\infty(\Omega)}K^2\int_\Omega u^{2\alpha}+ 2\lVert v_0\rVert^2_{L^\infty(\Omega)}K \int_\Omega u^{\alpha}+ \lVert v_0\rVert^2_{L^\infty(\Omega)}|\Omega|\leq c_1\quad \textrm{with } t\in (0,\TM),$$
so that \eqref{AA} reads
\begin{equation*}
\frac{d}{dt}\int_\Omega \lvert \nabla v\rvert^2\leq -2\int_\Omega \lvert \nabla v\rvert^2+c_1 \quad \textrm{on } (0,\TM),
\end{equation*}
and a comparison argument entails $\int_\Omega \lvert \nabla v\rvert^2\leq \max\{\frac{c_1}{2},\int_\Omega \lvert \nabla v_0\rvert^2\}$ for all $t \in (0,\TM).$ 

When, indeed, $\frac{1}{2}<\alpha<\min\{\frac{1}{2}+\frac{1}{n},1\}$, we can pick $\frac{1}{2}<\rho <1-\frac{n}{2}\big(\alpha-\frac{1}{2}\big)$ and set $\zeta=1-\rho-\frac{n}{2}\big(\alpha-\frac{1}{2}\big)>0$. Moreover, through the H\"{o}lder inequality, taking in mind
\eqref{f} and again  \eqref{massConservation}, we have
\begin{equation}\label{Estim_For_f1}
\lVert  f (u(\cdot, t)) \lVert_{L^{\frac{1}{\alpha}}(\Omega)}^{\frac{1}{\alpha}}= \int_\Omega f(u)^{\frac{1}{\alpha}}\leq  K^{\frac{1}{\alpha}}\int_\Omega u\leq K^{\frac{1}{\alpha}}m\quad \textrm{for all } t< \TM.
\end{equation}
As a consequence,  from the representation formula for $v$, we have
\[v(\cdot,t) =e^{t\Delta}v_0 +\int_0^t e^{(t-s)\Delta}f(u(\cdot,s))v(\cdot,s)ds \quad \textrm{for all } \,t\in (0,T_{max}),
\]
and aided by smoothing properties related to the Neumann heat semigroup $(e^{t \Delta})_{t \geq 0}$ (see Section 2 of \cite{HorstWink} and Lemma 1.3 of \cite{WinklAggre}), we obtain for some $\lambda_1>0$, $C_S>0$ and $c_2>0$, once bounds $v\leq \lVert  v_0 \lVert_{L^{\infty}(\Omega)}$ on $\bar{\Omega}\times (0,\TM)$ and \eqref{Estim_For_f1}  are considered,
\begin{equation*}
\begin{split}
 \lVert  v (\cdot, t) \lVert_{W^{1,2}(\Omega)} & \leq   \lVert e^{t \Delta}v_0 \lVert_{W^{1,2}(\Omega)}+\int_0^t \lVert e^{(t-s)\Delta}f(u(\cdot,s))v(\cdot,s)\lVert_{W^{1,2}(\Omega)}ds \\ &
\leq C_S \lVert v_0\lVert_{W^{1,2}(\Omega)}+C_S\int_0^t \lVert  (-\Delta +1)^\rho e^{(t-s)\Delta}f(u(\cdot,s))v(\cdot,s)\lVert_{L^2(\Omega)}ds
\\ &
\leq C_S \lVert v_0\lVert_{W^{1,2}(\Omega)}+C_S\lVert  v_0 \lVert_{L^{\infty}(\Omega)}|\Omega|^\frac{1}{2}\int_0^t (t-s)^{-\rho-\frac{n}{2}(\alpha-\frac{1}{2})}e^{-\lambda_1 (t-s)} \lVert f(u(\cdot,s))\lVert_{L^\frac{1}{\alpha}(\Omega)}ds \\ &
\leq c_2\Big(1+\int_0^t (t-s)^{-\rho-\frac{n}{2}(\alpha-\frac{1}{2})}e^{-\lambda_1 (t-s)}ds\Big).
\end{split}
\end{equation*}
By recalling the above position on $\zeta$, we introduce the Gamma function $\Gamma$ inferring $
\int_0^t (t-s)^{-\rho-\frac{n}{2}\big(\alpha-\frac{1}{2}\big)}e^{-\lambda_1 (t-s)}ds\leq \lambda_1^{-\zeta} \Gamma(\zeta)$, so obtain the second bound in \eqref{Cg} with $c_0= \max\{\frac{c_1}{2},\int_\Omega \lvert \nabla v_0\rvert^2,c_2^2(1+\lambda_1^{-\zeta} \Gamma(\zeta))^2\}.$
\end{proof}
\end{lemma}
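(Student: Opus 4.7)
The plan is to build the local solution by a standard fixed-point scheme for coupled chemotaxis systems, then read off the qualitative bounds on $u,v,w$ directly from the structure of \eqref{problem}. I would parametrise on $u$: for a candidate $u\in X_T := C^0(\bar\Omega\times[0,T])$ with $T>0$ small, first solve the elliptic equation \eqref{problem}$_3$ (Neumann regularity gives $w=w[u]\in C^{2,\kappa}(\bar\Omega)$), then the linear parabolic equation \eqref{problem}$_2$ for $v=v[u]$, and finally the drift-diffusion equation \eqref{problem}$_1$ with coefficients built from $v,w$ to obtain $\bar u = \Phi(u)$. For $T$ small enough, $\Phi$ contracts on a ball of $X_T$; its fixed point is a local classical solution, whose regularity is lifted to $C^0\cap C^{2,1}$ by parabolic Schauder estimates, and uniqueness follows by an energy estimate on the difference of two solutions. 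I would essentially copy the arguments of \cite{HorstWink,CiesWink,TaoWanM3ASAttrRep}. The dichotomy \eqref{ExtensibilityCrit} is then the extension principle: if $\lVert u(\cdot,t)\rVert_{L^\infty(\Omega)}+\lVert v(\cdot,t)\rVert_{W^{1,q}(\Omega)}$ stays bounded as $t\to\TM$, one restarts the fixed-point argument past $\TM$, contradicting maximality. Non-negativity of $u,v,w$ follows from successive comparison principles; integration of \eqref{problem}$_1$ together with the zero-flux condition yields \eqref{massConservation}; and $v\le\lVert v_0\rVert_{L^\infty(\Omega)}$ is the maximum principle applied to $v_t-\Delta v=-f(u)v\le 0$.

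The last claim—the uniform estimate on $\int_\Omega|\nabla v|^2$—splits naturally at $\alpha=\tfrac12$. For $\alpha\le\tfrac12$, I would test \eqref{problem}$_2$ by $-\Delta v$ to obtain $\tfrac{d}{dt}\int_\Omega|\nabla v|^2+2\int_\Omega(\Delta v)^2=2\int_\Omega f(u)v\Delta v$, then apply Young and use $v\le\lVert v_0\rVert_{L^\infty(\Omega)}$ with $f(u)\le Ku^\alpha$: every occurrence of $\int_\Omega u^{2\alpha}$ and $\int_\Omega u^\alpha$ is controlled by mass via the embeddings $L^1(\Omega)\subseteq L^{2\alpha}(\Omega)\subseteq L^\alpha(\Omega)$ (available precisely since $2\alpha\le 1$), leaving an ODI $\phi'+2\phi\le c$ for $\phi(t)=\int_\Omega|\nabla v|^2$. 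For $\tfrac12<\alpha<\min(\tfrac12+\tfrac1n,1)$ this direct energy route fails and I would instead use Duhamel's formula
\[
v(t)=e^{t\Delta}v_0+\int_0^t e^{(t-s)\Delta}\bigl[-f(u(s))v(s)\bigr]ds,
\]
combined with the Neumann-heat-semigroup smoothing inequality $\lVert e^{\tau\Delta}\varphi\rVert_{W^{1,2}(\Omega)}\lesssim \lVert(-\Delta+1)^\rho e^{\tau\Delta}\varphi\rVert_{L^2(\Omega)}\lesssim \tau^{-\rho-\frac{n}{2}(\alpha-\frac12)}e^{-\lambda_1\tau}\lVert\varphi\rVert_{L^{1/\alpha}(\Omega)}$ applied at $\tau=t-s$, $\varphi=f(u(s))v(s)$. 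The H\"older bound $\lVert f(u(s))\rVert_{L^{1/\alpha}(\Omega)}^{1/\alpha}\le K^{1/\alpha}\int_\Omega u=K^{1/\alpha}m$ and convergence of the resulting time integral (a standard Gamma-function computation) finish the proof, provided some $\rho\in(\tfrac12,\,1-\tfrac{n}{2}(\alpha-\tfrac12))$ can be picked—an interval that is non-empty exactly when $\alpha<\tfrac12+\tfrac1n$.

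The main obstacle is this second regime: one must reconcile the requirement $\rho>\tfrac12$ (needed for $W^{1,2}$ smoothing of the semigroup) with $\rho+\tfrac{n}{2}(\alpha-\tfrac12)<1$ (needed for the Duhamel integral to converge at $s=t$), and the borderline hypothesis $\alpha<\tfrac12+\tfrac 1 n$ is exactly what keeps this window open under the sole a priori bound on $u$, namely $L^1$-mass conservation. The first regime $\alpha\le\tfrac12$ could in principle also be attacked by the Duhamel route, but I would keep the case split because the energy argument there is cleaner, more elementary, and yields the sharper ODI $\phi'+2\phi\le c$ directly.
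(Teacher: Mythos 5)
Your proposal is correct and follows essentially the same route as the paper: local solvability, the dichotomy, nonnegativity and mass conservation are obtained by the standard fixed-point/comparison machinery of the cited references, and the uniform bound on $\int_\Omega|\nabla v|^2$ is split at $\alpha=\tfrac12$ exactly as in the paper, with the energy/ODI argument below and the Duhamel--semigroup argument (same exponent bookkeeping $\rho\in(\tfrac12,1-\tfrac n2(\alpha-\tfrac12))$, same $L^{1/\alpha}$ bound on $f(u)$) above. The only point worth spelling out is where the absorption term in $\phi'+2\phi\le c$ comes from: a direct Young estimate on $2\int_\Omega f(u)v\,\Delta v$ yields no such term, and the paper produces it by writing $f(u)=(f(u)-1)+1$ and integrating $2\int_\Omega v\,\Delta v=-2\int_\Omega|\nabla v|^2$ by parts---which your mention of the cross term $\int_\Omega u^\alpha$ indicates you intend.
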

In view of the forthcoming lemma,  in order to ensure the uniform-in-time $L^\infty$ bound of $(u,v,w)$, it will be sufficient in the sequel controlling the uniform-in-time $L^p$-norm of $u$, for some suitable $p>1.$ 
\begin{lemma}\label{FromLocalToGLobalBoundedLemma}
Under the hypotheses of Lemma \ref{LocalExistenceLemma} and any $\xi>0$, let  $(u,v,w)$  be the local-in-time classical solution  to problem \eqref{problem}. If for some $p>\max\{1,\frac{n}{2}\}$ the $u$-component and $g$ belong to $L^\infty((0,T_{max});L^p(\Omega))$, then $(u,v,w)$ is global in time, i.e. $T_{max}=\infty$, and moreover $u, v$ and $w$ are uniformly bounded in $\Omega \times (0,\infty)$ (in the sense of Remark \ref{RemarkUnifBounCalsSol}).
\begin{proof}
W.l.o.g., we assume $p>1$, for $n=1$, and $\frac{n}{2}<p<n$, for $n\geq 2$. In this way, classical regularity theory on elliptic equations in conjunction with Sobolev embedding theorems infer through the third equation of \eqref{problem} that $$w\in L^\infty((0,T_{max});W^{2,p}(\Omega))  \textrm{ and }  \nabla w\in  L^\infty((0,T_{max});W^{1,p}(\Omega)),$$ and so  for all $2\leq n<q<p^*:=\frac{np}{n-p}$, and $q=\infty$ for $n=1$,
\begin{equation}\label{propertiesofW}
 w\in  L^\infty((0,T_{max});C^{[2-(n/p)]}(\bar{\Omega})) \textrm{ and }  \nabla w\in  L^\infty((0,T_{max});L^{q}(\Omega)).
\end{equation} 
On the other hand, the hypotheses on $f$ are such that if $u\in L^\infty((0,\TM);L^p(\Omega))$ also $f\in L^\infty((0,\TM);L^p(\Omega))$. Henceforth, we again use the variation-of-constants formula for $v$ and smoothing properties of the Neumann heat semigroup $(e^{t \Delta})_{t \geq 0}$ as to obtain, taking into account the first bound in \eqref{Cg}, some proper $\tilde{C}_S, c_3>0$ producing for $\lambda_1>0$ as in Lemma \ref{LocalExistenceLemma} 
\begin{equation*}
\begin{split}
 \lVert  \nabla v (\cdot, t) \lVert_{L^{q}(\Omega)} & \leq   \lVert \nabla  e^{t \Delta}v_0 \lVert_{L^{q}(\Omega)}+\int_0^t \lVert \nabla  e^{(t-s)\Delta}f(u(\cdot,s))v(\cdot,s)\lVert_{L^{q}(\Omega)}ds \\ &
\leq \tilde{C}_S \lVert \nabla v_0\lVert_{L^{q}(\Omega)}+\tilde{C}_S\lVert  v_0 \lVert_{L^{\infty}(\Omega)}|\Omega|^\frac{1}{q}\int_0^t (1+(t-s))^{-\frac{1}{2}-\frac{n}{2}(\frac{1}{p}-\frac{1}{q})}e^{-\lambda_1 (t-s)} \lVert f(u(\cdot,s)))\lVert_{L^p(\Omega)}ds \\ &
\leq c_3\Big(1+\int_0^t (t-s)^{-\frac{1}{2}-\frac{n}{2}(\frac{1}{p}-\frac{1}{q})}e^{-\lambda_1 (t-s)}ds\Big).
\end{split}
\end{equation*}
Further, the assumptions on $q$ ensures that $-\frac{1}{2}-\frac{n}{2}(\frac{1}{p}-\frac{1}{q})>-1$, so as before  $\int_0^t (t-s)^{-\frac{1}{2}-\frac{n}{2}(\frac{1}{p}-\frac{1}{q})}e^{-\lambda_1 (t-s)}ds$ is finite and we also get $v \in  L^\infty((0,T_{max});W^{1,q}(\Omega)).$ From this inclusion and \eqref{propertiesofW}, since $W^{1,q}(\Omega) \hookrightarrow L^\infty(\Omega)$ for $q>n$, we immediately have that $v,w\in L^\infty((0,T_{max});L^{\infty}(\Omega))$, and moreover for $\tilde{v}=\chi  v-\xi  w$ some positive constant $C_q$ can be found so to get
\begin{equation}\label{Bound_v_1-q}
\lVert  \tilde{v} (\cdot, t)\lVert_{L^{q}(\Omega)}+\lVert  \nabla \tilde{v} (\cdot, t)\lVert_{L^{q}(\Omega)}  \leq C_q\quad \textrm{for all}\quad t\in(0,T_{max}).
\end{equation}
Subsequently, for any $(x,t)\in \Omega \times (0,T_{max})$, the first equation of  \eqref{problem} reads $u_t=\Delta u-\nabla \cdot (u \nabla \tilde{v})$ and for $t_0:=\max\{0,t-1\}$ we have
\begin{equation*}
\begin{split}
u (\cdot,t) &\leq e^{(t-t_0)\Delta}u(\cdot,t_0)-\int_{t_0}^t e^{(t-s)\Delta}\nabla \cdot (u (\cdot,s) \nabla \tilde{v} (\cdot,s))ds=: u_{1}(\cdot,t)+u_{2}(\cdot,t). 
\end{split}
\end{equation*}
As to the conclusion $u\in L^\infty((0,\TM);L^\infty(\Omega))$, this is an adaptation of \cite[Lemma 3.2]{BellomoEtAl}, and we herewith omit it; more precisely (see also \cite[Lemma 4.1]{ViglialoroWoolleyAplAnal}), the $L^\infty(\Omega)$-norm of $u$ on $(0,T_{max})$ is achieved by  controlling (also with the support of $u\in L^\infty((0,T_{max});L^{p}(\Omega))$, for $\frac{n}{2}<p<n$ only, \eqref{Bound_v_1-q} and \eqref{massConservation}) a suitable  norm of the cross-diffusion term $u \nabla \tilde{v}$. Finally, $u\in L^\infty((0,\TM);L^\infty(\Omega))$ and $v\in L^\infty((0,\TM);W^{1,q}(\Omega))$ imply from the dichotomy criterion \eqref{ExtensibilityCrit} that necessarily we must have $\TM=\infty$, so that actually $u,v,w\in L^\infty((0,\infty);L^\infty(\Omega))$.
\end{proof}
\end{lemma}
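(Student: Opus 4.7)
The plan is to exploit the dichotomy criterion \eqref{ExtensibilityCrit}: once $u$ is uniformly bounded in $L^\infty(\Omega)$ and $v$ in $W^{1,q}(\Omega)$ on $(0,\TM)$, both $\TM=\infty$ and the uniform boundedness on $(0,\infty)$ follow for free. I would therefore bootstrap the hypothesis $u,g(u)\in L^\infty((0,\TM);L^p(\Omega))$ first through the elliptic equation for $w$, next through the variation-of-constants formula for $v$, and finally through a drift-diffusion reformulation of the $u$-equation, namely $u_t=\Delta u-\nabla\cdot(u\nabla\tilde v)$ with $\tilde v:=\chi v-\xi w$. A preliminary reduction, using the trivial embedding $L^{p'}(\Omega)\hookrightarrow L^{p}(\Omega)$ for $p'\geq p$, lets me assume $p>1$ when $n=1$ and $\tfrac{n}{2}<p<n$ when $n\geq 2$.

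For $w$, standard elliptic $W^{2,p}$-regularity applied to $0=\Delta w-\delta w+g(u)$ together with the hypothesis $g(u)\in L^\infty((0,\TM);L^p(\Omega))$ yields $w\in L^\infty((0,\TM);W^{2,p}(\Omega))$. The chosen range of $p$ lets Sobolev embeddings deliver $w\in L^\infty((0,\TM);C^{[2-n/p]}(\bar\Omega))$ and $\nabla w\in L^\infty((0,\TM);L^q(\Omega))$ for any $n<q<p^*:=\tfrac{np}{n-p}$ (and $q=\infty$ when $n=1$). For $v$, since $f(u)\leq Ku^\alpha$ with $\alpha<1$ and $u\in L^p$, one obtains $f(u)\in L^\infty((0,\TM);L^p(\Omega))$; combining this with the $L^\infty$-bound $v\leq\|v_0\|_{L^\infty(\Omega)}$ from \eqref{Cg} and with the $L^p$--$L^q$ smoothing estimates for the Neumann heat semigroup $(e^{t\Delta})_{t\geq 0}$ in the identity
\[
v(\cdot,t)=e^{t\Delta}v_0-\int_0^t e^{(t-s)\Delta}\bigl(f(u(\cdot,s))\,v(\cdot,s)\bigr)\,ds,
\]
one obtains a uniform bound of $\|\nabla v(\cdot,t)\|_{L^q(\Omega)}$, provided the time-singular factor $(t-s)^{-\frac12-\frac{n}{2}(\frac{1}{p}-\frac{1}{q})}$ remains integrable near $s=t$. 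The range $q<p^*$ guarantees precisely this, while $q>n$ and Sobolev embedding give $v\in L^\infty((0,\TM);L^\infty(\Omega))$ as well.

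At this stage $\nabla\tilde v$ is uniformly bounded in $L^q(\Omega)$ on $(0,\TM)$ with $q>n$, so the cross-diffusion term in $u_t=\Delta u-\nabla\cdot(u\nabla\tilde v)$ is controllable. Splitting $t_0:=\max\{0,t-1\}$ and writing
\[
u(\cdot,t)=e^{(t-t_0)\Delta}u(\cdot,t_0)-\int_{t_0}^t e^{(t-s)\Delta}\nabla\cdot\bigl(u(\cdot,s)\nabla\tilde v(\cdot,s)\bigr)\,ds,
\]
the standard $L^p$--$L^\infty$ semigroup estimates applied to the divergence term (as in \cite[Lemma 3.2]{BellomoEtAl}), together with $u\in L^\infty((0,\TM);L^p(\Omega))$ and the mass conservation \eqref{massConservation} used to handle the initial layer $u(\cdot,t_0)$, yield $u\in L^\infty((0,\TM);L^\infty(\Omega))$. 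Criterion \eqref{ExtensibilityCrit} then forces $\TM=\infty$ and the full conclusion in the sense of Remark \ref{RemarkUnifBounCalsSol}.

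The main obstacle is the simultaneous balancing of $p$ and $q$: one needs $q>n$ (to pass from $W^{1,q}$ to $L^\infty$ both for $v$ and, implicitly, for the drift in the $u$-equation) and at the same time $q<p^*$ (so that both the Sobolev embedding $W^{2,p}\hookrightarrow W^{1,q}$ used on $w$ and the exponent $\theta=\tfrac12+\tfrac{n}{2}(\tfrac1p-\tfrac1q)$ in the time-kernel for $\nabla v$ satisfy $\theta<1$). These two requirements together force the threshold $p>\tfrac{n}{2}$, which is exactly the assumption of the lemma; the ensuing calculations are routine, but tracking the exponents so that every integral converges is where the delicate work sits.
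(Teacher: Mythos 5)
Your proposal is correct and follows essentially the same route as the paper's own proof: the same w.l.o.g. reduction on $p$, elliptic $W^{2,p}$-regularity plus Sobolev embedding for $w$, the Neumann heat semigroup estimate on the variation-of-constants formula for $\nabla v$ with the integrability condition $q<p^*$, the drift reformulation $u_t=\Delta u-\nabla\cdot(u\nabla\tilde v)$ with $\tilde v=\chi v-\xi w$ and the splitting at $t_0=\max\{0,t-1\}$ handled via \cite[Lemma 3.2]{BellomoEtAl}, and finally the dichotomy criterion \eqref{ExtensibilityCrit}. The exponent bookkeeping you highlight ($q>n$ versus $q<p^*$, forcing $p>\tfrac{n}{2}$) is exactly the balance the paper relies on.
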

\section{A priori estimates and proof of the  theorems}\label{EstimatesAndProofSection}
In this section we control the $L^p$-norm, $p>1$, by establishing an absorptive differential inequality for the functional $y(t):=\int_\Omega u^p +(\frac{\chi^2}{\gamma})^p \int_\Omega |\nabla v|^{2p}$.
\begin{lemma}\label{Estim_general_For_u^pLemma} 
Let $n\geq 1$, $l\geq 1$ and the hypotheses of Lemma \ref{LocalExistenceLemma} be satisfied. Then for every $\xi>0$ the local solution $(u,v,w)$ to problem \eqref{problem} is such that for any $p \in (\max\{l,\frac{l(nl-2)}{n}\},\infty)$ and all $t \in (0,T_{max})$ one has:
\begin{itemize}
\item [$\bullet$] For $l=1$ and some $c_4>0$ 
\begin{equation*}
\frac{d}{dt} \int_\Omega u^p \leq -\frac{2(p-1)}{p}\int_\Omega |\nabla u^{\frac{p}{2}}|^2 
+ \frac{\chi^2 p(p-1)}{2(p+1)} \left(\frac{\xi \gamma (p+1)}{2p^2\chi^2}\right)^{-p} \int_\Omega |\nabla v|^{2(p+1)}
- \frac{\xi \gamma(p-1)}{4} \int_\Omega u^{p+1} + c_4; 
\end{equation*}
\item [$\bullet$]  For $l>1$, every $\epsilon_1, \epsilon_2>0$ and some $c_5>0$
\begin{equation*}
\frac{d}{dt} \int_\Omega u^p \leq \left[-\frac{2(p-1)}{p} + \epsilon_1\right] \int_\Omega |\nabla u^\frac{p}{2}|^2 + \epsilon_2 \int_\Omega |\nabla v|^{2 (p+1)} -\frac{\xi \gamma(p-1)}{4} \int_\Omega u^{p+l} + c_5.
\end{equation*}
\end{itemize}
\begin{proof}
Testing the first equation of problem \eqref{problem} by $u^p$, using its boundary conditions and recalling the properties of $g$ in \eqref{f}, provide on  $(0,T_{max})$
\begin{equation}\label{Estim_1_For_u^p}  
\begin{split}
\frac{1}{p}\frac{d}{dt} \int_\Omega u^p &=\int_\Omega u^{p-1}u_t= -(p-1) \int_\Omega u^{p-2} |\nabla u|^2 + (p-1) \chi \int_\Omega u^{p-1} \nabla u \cdot \nabla v - \xi(p-1) \int_\Omega u^{p-1} 
\nabla u \cdot \nabla w\\
&= -(p-1) \int_\Omega u^{p-2} |\nabla u|^2 + (p-1) \chi \int_\Omega u^{p-1} \nabla u \cdot \nabla v +\frac{\xi \delta (p-1)}{p} \int_\Omega u^p w- \frac{\xi \gamma (p-1)}{p} \int_\Omega u^{p+l},
\end{split}
\end{equation}
whereas Young's inequality infers
\begin{equation} \label{Young_0}  
(p-1)\chi \int_\Omega u^{p-1} \nabla u \cdot \nabla v \leq  \frac{(p-1)}{2}\int_\Omega u^{p-2} |\nabla u|^2  + \frac{\chi^2(p-1)}{2}\int_\Omega u^p |\nabla v|^2\quad \text{on }\, (0,\TM).
\end{equation}
Now, let us analyze separately the two cases.
\begin{itemize}
\item [$\bullet$] Case $l=1$.  The Young inequality entails 
\begin{equation} \label{Young_1}  
\frac{\chi^2(p-1)}{2}\int_\Omega u^p |\nabla v|^2
\leq \frac{\xi \gamma (p-1)}{4p} \int_\Omega u^{p+1}  
+\frac{\chi^2 (p-1)}{2(p+1)} \left(\frac{\xi \gamma (p+1)}{2p^2\chi^2}\right)^{-p} \int_\Omega |\nabla v|^{2(p+1)} \quad \text{ for all } \, t\in (0, T_{max}).
\end{equation}
On the other hand, since $l=1$, an integration over $\Omega$ of the third equation of \eqref{problem}, together with  the mass conservation property \eqref{massConservation}, infer $\int_{\Omega} w=\frac{m\gamma}{\delta}$ on  $(0, T_{max}).$ In this way, by exploiting Young's inequality, again, and relation \eqref{EhrlingTypeInequalityWithMass} with $\psi=w$, $g(u)=\gamma u$ and $\overline{p}=p$, give for suitable positive constants $\hat{c}, c_6$
\begin{equation} \label{Young_2}
\begin{split}
\frac{\xi \delta (p-1)}{p} \int_\Omega u^p w &\leq  \frac{\xi \gamma (p-1)}{4p} \int_\Omega u^{p+1} +\hat{c} \int_\Omega w^{p+1}\leq \frac{\xi \gamma (p-1)}{4p} \int_\Omega u^{p+1} + \frac{\xi \gamma (p-1)}{4p} \int_\Omega u^{p+1} + c_6\quad \text{ on } \; (0, T_{max}).
\end{split}
\end{equation}
By plugging estimates \eqref{Young_0}, \eqref{Young_1} and \eqref{Young_2} into bound \eqref{Estim_1_For_u^p}, and for $c_4=pc_6$, we directly obtain the claim in view of the identity
\[
\int_\Omega u^{p-2} |\nabla u|^2 = \frac{4}{p^2} \int_\Omega |\nabla u^{\frac{p}{2}}|^2 \quad \text{on  } (0,T_{max}).
\]
\item [$\bullet$] Case $l>1$.  Let us first estimate the term $\left(\int_{\Omega} u^l\right)^{\frac{p+l}{l}}$: by applying the Gagliardo--Nirenberg inequality 
(see \cite{Nirenber_GagNir_Ineque}) combined with \eqref{InequalityA+BToPowerP}, for any $\overline{c}>0$ we can introduce a suitable constant $c_7>0$ and obtain 
\begin{equation*}
\overline{c} \left(\int_{\Omega} u^l \right)^{\frac{p+l}{l}}= \overline{c} \|u^{\frac{p}{2}}\|_{L^{\frac{2l}{p}}(\Omega)}^{\frac{2(p+l)}{p}} \leq c_7 \|\nabla u^{\frac{p}{2}}\|_{L^2(\Omega)}^{\frac{2(p+l)}{p}\theta_1} \|u^{\frac{p}{2}}\|_{L^{\frac{2}{p}}(\Omega)}^{\frac{2(p+l)}{p}(1-\theta_1)} + c_7 \|u^{\frac{p}{2}}\|_{L^{\frac{2}{p}}(\Omega)}^{\frac{2(p+l)}{p}} \quad \text{ for all } t \in(0,T_{max}),
\end{equation*}
where for $p$ as in our assumptions we have
\[
\theta_1=\frac{1-\frac{1}{l}}{1+\frac{2}{np}-\frac{1}{p}} \in (0,1).
\]
Hence, by recalling the mass conservation property \eqref{massConservation}, and in view of $\frac{(p+l)}{p}\theta_1 <1$, the Young and above inequalities entail for  $c_8,c_9>0$ and any $\epsilon_1>0$ 
\begin{equation}\label{GN3}
\overline{c} \left(\int_\Omega u^l \right)^{\frac{p+l}{l}} \leq c_8 \Big(\int_\Omega |\nabla u^\frac{p}{2}|^2\Big)^{\frac{(p+l)}{p}\theta_1}+ c_8 \leq \frac{\epsilon_1}{p} \int_\Omega |\nabla u^\frac{p}{2}|^2  + c_9 \quad \text{with } t \in (0,T_{max}).
\end{equation}
On the other hand,  by noting that $\frac{2(p+l)}{l} < 2(p+1)$ for $l>1$, a double application of the Young inequality leads for all $t\in (0,T_{max})$, $\epsilon_2>0$ and  some $c_{10}, c_{11} >0$ to
\begin{equation}\label{Young_4}
\frac{\chi^2(p-1)}{2}\int_\Omega u^p |\nabla v|^2 \leq \frac{\xi \gamma (p-1)}{4p} \int_\Omega u^{p+l} + c_{10} \int_\Omega |\nabla v|^{2 \frac{p+l}{l}}
\leq \frac{\xi \gamma (p-1)}{4p} \int_\Omega u^{p+l} +\frac{\epsilon_2}{p}  \int_\Omega |\nabla v|^{2 (p+1)} + c_{11}.
\end{equation} 
Now by applying restrictions in \eqref{f} and relation \eqref{InequalityA+BToPowerP}, as well as the obvious inequality $u\leq u+1$, we have these estimates for any $\bar{p}>1$ and some $c_{12},c_{13},c_{14},c_{15}>0$:
\begin{equation*}
\int_{\Omega} (g(u))^{\overline{p}+1}\leq \int_{\Omega} \left(\gamma u (u+1)^{l-1}\right)^{\overline{p}+1} 
\leq c_{12} \int_{\Omega} u^{l(\overline{p}+1)} +c_{13}\quad \text{for all } t\in (0,\TM),
\end{equation*}
and also
\begin{equation*}
\left(\int_{\Omega} g(u)\right)^{\overline{p}+1} \leq c_{14} \left(\int_{\Omega} u^l \right)^{\overline{p}+1}  +c_{15} \quad \text{for all } t\in (0,\TM),
\end{equation*}
Aided by the gained estimates, we now use a combination of Young's inequality and relation \eqref{EhrlingTypeInequalityWithMass} with $\psi=w$, $\overline{p}=\frac{p}{l}>1$; we get with some $c_{16}>0$
\begin{equation} \label{Young_5}
\begin{split}
\frac{\xi \delta (p-1)}{p} \int_\Omega u^p w &\leq \frac{\xi \gamma (p-1)}{4p} \int_\Omega u^{p+l} + \frac{\hat{c}}{c_{12}}\int_\Omega w^{\overline{p}+1}\\
&\leq \frac{\xi \gamma (p-1)}{4p} \int_\Omega u^{p+l} + \sigma \int_\Omega u^{p+l} + \overline{c} \left(\int_{\Omega} u^l \right)^{\frac{p+l}{l}} + c_{16}\quad \text{on }\, (0,\TM).
\end{split}
\end{equation}
By collecting \eqref{Young_0}, \eqref{Young_4}, and \eqref{Young_5}, with $\sigma=\frac{\xi\gamma(p-1)}{4p}$, bound \eqref{Estim_1_For_u^p} gives the conclusions also in view of relation \eqref{GN3}.
\end{itemize}
\end{proof}
\end{lemma}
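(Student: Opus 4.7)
The plan is to test the first equation of \eqref{problem} against $u^{p-1}$, use the elliptic identity $\Delta w=\delta w-g(u)$ to convert the repulsive cross-term into a manifestly dissipative contribution in $u^{p+l}$, and then control the two residual ``bad'' quantities $\int_\Omega u^p|\nabla v|^2$ and $\int_\Omega u^p w$ by Young's inequality combined with the elliptic bound \eqref{EhrlingTypeInequalityWithMass}.

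First, testing by $u^{p-1}$ and integrating by parts produces $\frac{1}{p}\frac{d}{dt}\int_\Omega u^p = -(p-1)\int_\Omega u^{p-2}|\nabla u|^2 + \chi(p-1)\int_\Omega u^{p-1}\nabla u\cdot\nabla v - \xi(p-1)\int_\Omega u^{p-1}\nabla u\cdot\nabla w$. For the last term I would rewrite $u^{p-1}\nabla u=\frac{1}{p}\nabla u^p$, integrate by parts once more and substitute the third equation of \eqref{problem} to obtain $\frac{\xi\delta(p-1)}{p}\int_\Omega u^p w-\frac{\xi(p-1)}{p}\int_\Omega u^p g(u)$; the lower bound $g(u)\geq\gamma u^l$ from \eqref{f} then yields the dissipation $-\frac{\xi\gamma(p-1)}{p}\int_\Omega u^{p+l}$. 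A single Young's inequality on the chemoattraction cross-term absorbs half of the diffusion and creates the integrand $u^p|\nabla v|^2$.

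Next, I need to dispose of $\int_\Omega u^p|\nabla v|^2$ and $\int_\Omega u^p w$. The first I would split by Young's inequality into a fraction of $\int_\Omega u^{p+l}$ (with coefficient at most $\frac{\xi\gamma(p-1)}{4p}$) plus a power of $|\nabla v|$: for $l=1$ the exponent lands directly at $2(p+1)$ and a careful choice of the Young constant produces exactly the stated prefactor $\frac{\chi^2 p(p-1)}{2(p+1)}(\frac{\xi\gamma(p+1)}{2p^2\chi^2})^{-p}$, whereas for $l>1$ one first reaches $\int_\Omega|\nabla v|^{2(p+l)/l}$ and a second Young's inequality interpolates it up to $\int_\Omega|\nabla v|^{2(p+1)}$ modulo a constant (since $\frac{2(p+l)}{l}<2(p+1)$ for $l>1$). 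For $\int_\Omega u^p w$ a Young step with exponents $\frac{p+l}{l}$ and $\frac{p+l}{p}$ furnishes $\frac{\xi\gamma(p-1)}{4p}\int_\Omega u^{p+l}$ plus a multiple of $\int_\Omega w^{(p+l)/l}$, to which the elliptic estimate \eqref{EhrlingTypeInequalityWithMass} applied with $\psi=w$ and $\overline p=p/l$ assigns the bound $\sigma\int_\Omega g(u)^{(p+l)/l}+\tilde c(\int_\Omega g(u))^{(p+l)/l}$; the upper bound $g(u)\leq\gamma u(u+1)^{l-1}$ from \eqref{f} turns the first summand into $\sigma\int_\Omega u^{p+l}$ plus constants, and choosing $\sigma=\frac{\xi\gamma(p-1)}{4p}$ absorbs it.

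The main obstacle is the residual $(\int_\Omega u^l)^{(p+l)/l}$ left over from the elliptic step when $l>1$. For $l=1$ mass conservation \eqref{massConservation} renders this quantity a constant, which explains why no extra work is required and the coefficient in front of $\int_\Omega u^{p+1}$ telescopes exactly into $\frac{\xi\gamma(p-1)}{4}$ (namely, the initial $\frac{\xi\gamma(p-1)}{p}$ minus three absorbing Young steps each contributing $\frac{\xi\gamma(p-1)}{4p}$). For $l>1$ I would instead apply the Gagliardo--Nirenberg inequality to $\|u^{p/2}\|_{L^{2l/p}}^{2(p+l)/p}$, whose interpolation exponent $\theta_1=(1-1/l)/(1+2/(np)-1/p)$ belongs to $(0,1)$ precisely under the hypothesis $p>\max\{l,l(nl-2)/n\}$ and additionally satisfies $(p+l)\theta_1/p<1$. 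A final Young step then converts the resulting $L^2$-gradient norm into an $\epsilon_1\int_\Omega|\nabla u^{p/2}|^2$ contribution plus a constant. Combining with the identity $\int_\Omega u^{p-2}|\nabla u|^2=\frac{4}{p^2}\int_\Omega|\nabla u^{p/2}|^2$ and multiplying the whole inequality by $p$ produces both announced differential inequalities.
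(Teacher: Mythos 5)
Your proposal is correct and follows essentially the same route as the paper: testing by $u^{p-1}$, converting the repulsive term via the elliptic equation into the dissipative $-\frac{\xi\gamma(p-1)}{p}\int_\Omega u^{p+l}$ plus $\frac{\xi\delta(p-1)}{p}\int_\Omega u^p w$, absorbing $\int_\Omega u^p|\nabla v|^2$ and $\int_\Omega u^p w$ by Young's inequality together with the elliptic estimate \eqref{EhrlingTypeInequalityWithMass} applied with $\overline{p}=p/l$, and handling the residual $\left(\int_\Omega u^l\right)^{\frac{p+l}{l}}$ by mass conservation when $l=1$ and by Gagliardo--Nirenberg (with the same exponent $\theta_1$ and the same role of the hypothesis on $p$) when $l>1$. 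The coefficient bookkeeping you describe, three absorptions of $\frac{\xi\gamma(p-1)}{4p}$ leaving $\frac{\xi\gamma(p-1)}{4}$ after multiplying by $p$, matches the paper exactly.
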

In the forthcoming lemma we adapt to our framework some derivations already developed in  \cite[Lemma 4.2]{LankeitWangConsumptLogistic}.
\begin{lemma}\label{Estim_general_For_nablav^2pLemma} 
Let $n\geq 1$, $l\geq 1$ and the hypotheses of Lemma \ref{LocalExistenceLemma} be satisfied. Then for every $\xi>0$ the local solution $(u,v,w)$ to problem \eqref{problem} is such for any $p \in (1,\infty)$ and all $t\in (0,\TM)$ one has: 
\begin{itemize}
\item [$\bullet$] For $l=1$ and some $c_{17}>0$ 
\begin{equation*}
\begin{split}
\left(\frac{\chi^2}{\gamma}\right)^p\frac{d}{dt}\int_\Omega  |\nabla v|^{2p} + \left(\frac{\chi^2}{\gamma}\right)^pp \int_\Omega |\nabla v|^{2p-2} |D^2v|^2& \leq \frac{\xi\gamma(p-1)}{4} \int_\Omega u^{p+1} 
\\ & 
\quad + \frac{p}{8(4p^2+n)\|v\|_{L^{\infty}(\Omega)}^2}\left(\frac{\chi^2}{\gamma}\right)^p \int_\Omega |\nabla v|^{2(p+1)} + c_{17};
\end{split}
\end{equation*}
\item [$\bullet$] For $l>1$, every positive  $\epsilon_3$ and some $c_{18}>0$ 
\begin{equation*}
\left(\frac{\chi^2}{\gamma}\right)^p\frac{d}{dt}\int_\Omega  |\nabla v|^{2p}+ \left(\frac{\chi^2}{\gamma}\right)^pp \int_\Omega |\nabla v|^{2p-2} |D^2v|^2 \leq \frac{\xi \gamma(p-1)}{4} \int_\Omega u^{p+l} + \epsilon_3 \int_\Omega |\nabla v|^{2(p+1)} + c_{18}.
\end{equation*}
\end{itemize}
\begin{proof}
From the second equation of \eqref{problem}, we derive this pointwise identity valid for all $x\in \Omega$ and $t\in(0,T_{max})$:
\begin{equation*}
(|\nabla v|^2)_t =2 \nabla v\cdot \nabla v_t=2 \nabla v \cdot \nabla \Delta v -2\nabla v \cdot \nabla(f(u) v)
= \Delta |\nabla v|^2 -2 |D^2 v|^2  -2\nabla v \cdot \nabla (f(u) v).
\end{equation*}
Successively, multiplying this last relation by $\lvert \nabla v\rvert^{2p-2}$ and integrating over $\Omega$ lead to
\begin{equation}\label{A_00}
\begin{split}
&\left(\frac{\chi^2}{\gamma}\right)^p \frac{1}{p}\frac{d}{dt}\int_\Omega  |\nabla v|^{2p}+(p-1)\left(\frac{\chi^2}{\gamma}\right)^p \int_\Omega |\nabla v|^{2p-4}|\nabla |\nabla v|^2|^2+2\left(\frac{\chi^2}{\gamma}\right)^p \int_\Omega |\nabla v|^{2p-2} |D^2v|^2\\ &
\quad = -2 \left(\frac{\chi^2}{\gamma}\right)^p \int_\Omega |\nabla v|^{2p-2} \nabla v \cdot \nabla (f(u) v) +\left(\frac{\chi^2}{\gamma}\right)^p \int_{\partial \Omega} (|\nabla v|^2)^{p-1} (|\nabla v|^2)_{\nu} 
\quad \textrm{for all}\quad t \in (0,T_{max}).
\end{split}
\end{equation}
Now, by virtue of the bound for $\nabla v$ in \eqref{Cg}, we apply estimate \eqref{NoConvexity} with $\psi=v$ so to obtain for $c_{19}=C_{\eta}c_0^p$ 
\begin{equation}\label{NoConv}
\left(\frac{\chi^2}{\gamma}\right)^p \int_{\partial \Omega} (|\nabla v|^2)^{p-1} (|\nabla v|^2)_{\nu} \leq \eta \left(\frac{\chi^2}{\gamma}\right)^p \int_\Omega |\nabla v|^{2p-4}|\nabla |\nabla v|^2|^2 + c_{19} 	\quad \text{on } (0,T_{max}).
\end{equation}
Hence, an integration by parts to the right hand side term of \eqref{A_00} produces, also thanks to the first estimate in \eqref{Cg} and assumption \eqref{f}, 
\begin{equation}\label{A_01} 
\begin{split}
-2 \left(\frac{\chi^2}{\gamma}\right)^p &\int_\Omega |\nabla v|^{2p-2} \nabla v \cdot \nabla (f(u) v) = 2 \left(\frac{\chi^2}{\gamma}\right)^p 
\int_\Omega f(u) v|\nabla v|^{2p-2}\Delta v\\ 
&\quad+ 2(p-1) \left(\frac{\chi^2}{\gamma}\right)^p \int_\Omega f(u) v|\nabla v|^{2p-4}\nabla v \cdot \nabla \lvert\nabla v\rvert^2 \\& 
 \leq 2 K \left(\frac{\chi^2}{\gamma}\right)^p \lVert v_0\rVert_{L^\infty(\Omega)}\int_\Omega u^{\alpha}|\nabla v|^{2p-2}|\Delta v|\\ 
&+2 K \left(\frac{\chi^2}{\gamma}\right)^p (p-1) \lVert v_0\rVert_{L^\infty(\Omega)}\int_\Omega u^{\alpha}|\nabla v|^{2p-3}\lvert\nabla \lvert \nabla v\rvert^2\rvert 
\quad \textrm{for all}\quad t \in (0,T_{max}).
\end{split}
\end{equation}
In addition, the Young and \eqref{InequalityLaplacian} inequalities allow us to derive for some $c_{20},c_{21}>0$
\begin{equation}\label{Estimateu^2gradv^2_1}
2 K \left(\frac{\chi^2}{\gamma}\right)^p \lVert v_0\rVert_{L^\infty(\Omega)}\int_\Omega u^{\alpha}|\nabla v|^{2p-2}|\Delta v| \leq  \left(\frac{\chi^2}{\gamma}\right)^p \int_\Omega |\nabla v|^{2p-2}|D^2 v|^2 
+ c_{20} \int_\Omega u^{2\alpha} |\nabla v|^{2p-2} \quad \textrm{on } \;  (0,T_{max})
\end{equation}
and similarly for all $t \in (0,T_{max})$ and any $\eta \in (0,p-1)$
\begin{equation}\label{Estimateu^2gradv^2_Second}
 2 K \left(\frac{\chi^2}{\gamma}\right)^p (p-1) \lVert v_0\rVert_{L^\infty(\Omega)}\int_\Omega u^{\alpha} |\nabla v|^{2p-3}\lvert\nabla \lvert \nabla v\rvert^2\rvert\leq (p-1-\eta)\left(\frac{\chi^2}{\gamma}\right)^p \int_\Omega |\nabla v|^{2p-4}\lvert \nabla \lvert \nabla v\rvert^2\rvert^2 + c_{21} \int_\Omega u^{2\alpha} |\nabla v|^{2p-2}.
\end{equation}
By inserting \eqref{NoConv}, \eqref{A_01}, \eqref{Estimateu^2gradv^2_1} and \eqref{Estimateu^2gradv^2_Second} into \eqref{A_00}, we deduce that for some 
$c_{22},c_{23}>0$
\begin{equation}\label{Estim}
\left(\frac{\chi^2}{\gamma}\right)^p \frac{1}{p}\frac{d}{dt}\int_\Omega  |\nabla v|^{2p}+ \left(\frac{\chi^2}{\gamma}\right)^p \int_\Omega |\nabla v|^{2p-2} |D^2v|^2 \leq c_{22}
\int_\Omega u^{2\alpha} |\nabla v|^{2p-2} +c_{23} \quad \textrm{on } \;  (0,T_{max}).
\end{equation}
Further, for all $l\geq 1$, the Young inequality also gives on $(0,T_{max})$ 
\begin{equation}\label{Estimating1nablav^2p+2}
c_{22} \int_\Omega u^{2\alpha} |\nabla v|^{2p-2} 
\leq \frac{\xi \gamma (p-1)}{4p} \int_\Omega u^{p+l} + c_{24} \int_\Omega |\nabla v|^{\frac{2(p-1)(p+l)}{p+l-2\alpha}}
\leq \frac{\xi \gamma (p-1)}{4p} \int_\Omega u^{p+l} + \frac{\epsilon_3}{p} \int_\Omega |\nabla v|^{2(p+1)} + c_{25},
\end{equation}
where we have used that $\frac{2(p-1)(p+l)}{p+l-2\alpha} < 2(p+1)$ (recall $0<\alpha<1$) and $\epsilon_3$ is an arbitrarily positive constant and $c_{24}, c_{25}>0$. We have the two claims introducing \eqref{Estimating1nablav^2p+2} into \eqref{Estim}, with an evident choice of $\epsilon_3$ when $l=1.$ 
\end{proof}
\end{lemma}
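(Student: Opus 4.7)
The plan is to derive an evolution inequality for $\int_\Omega |\nabla v|^{2p}$ by testing the second equation of \eqref{problem} in the spirit of standard Bochner-type arguments. First I would obtain the pointwise identity $(|\nabla v|^2)_t = \Delta|\nabla v|^2 - 2|D^2v|^2 - 2\nabla v\cdot \nabla(f(u)v)$, then multiply by $|\nabla v|^{2p-2}$ and integrate over $\Omega$. After integrating the $\Delta|\nabla v|^2$ piece by parts the left-hand side will carry three coercive contributions (the time derivative, the Hessian term $\int|\nabla v|^{2p-2}|D^2v|^2$, and the gradient term $\int|\nabla v|^{2p-4}|\nabla|\nabla v|^2|^2$), while the right-hand side contains a boundary integral and the cross term with $f(u)v$.

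Next I would neutralize the boundary term via inequality \eqref{NoConvexity}: the free parameter $\eta$ is taken small enough that the resulting piece is absorbed by $(p-1)\int|\nabla v|^{2p-4}|\nabla|\nabla v|^2|^2$ on the left, with $\int|\nabla v|^2\leq c_0$ from \eqref{Cg} taking care of the leftover. A second integration by parts on the cross term yields $2\int f(u)v\,|\nabla v|^{2p-2}\Delta v + 2(p-1)\int f(u)v\,|\nabla v|^{2p-4}\nabla v\cdot\nabla|\nabla v|^2$; using $v\leq \lVert v_0\rVert_{L^\infty(\Omega)}$ from \eqref{Cg}, $f(u)\leq Ku^\alpha$ from \eqref{f} and the pointwise bound $(\Delta v)^2\leq n|D^2v|^2$ from \eqref{InequalityLaplacian}, Young's inequality lets me absorb both pieces into the Hessian and gradient reservoirs already on the left, leaving only $C\int u^{2\alpha}|\nabla v|^{2p-2}$ plus a constant.

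The decisive step is a double application of Young's inequality to $\int u^{2\alpha}|\nabla v|^{2p-2}$. With conjugate exponents $\frac{p+l}{2\alpha}$ and $\frac{p+l}{p+l-2\alpha}$, the $u$-side becomes a multiple of $\int u^{p+l}$, calibrated to match the coefficient $\frac{\xi\gamma(p-1)}{4p}\int u^{p+l}$ prescribed by the statement, while the gradient side is $\int|\nabla v|^{2(p-1)(p+l)/(p+l-2\alpha)}$. Because $\alpha<1$ and $l\geq 1$ yield $\alpha(p+1)<p+l$, a direct manipulation gives $\frac{2(p-1)(p+l)}{p+l-2\alpha}<2(p+1)$, so a further Young step converts this gradient term into an arbitrarily small multiple of $\int|\nabla v|^{2(p+1)}$ plus a constant; this is exactly what the $l>1$ assertion requires with the free parameter $\epsilon_3$.

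In the case $l=1$ the same chain applies, but now the multiplier in front of $\int|\nabla v|^{2(p+1)}$ is not free: it has to be fixed to the explicit value $\frac{p}{8(4p^2+n)\lVert v_0\rVert_{L^\infty(\Omega)}^2}(\chi^2/\gamma)^p$ appearing in the statement, tailored so that in the subsequent proof of Theorem \ref{MainTheorem} this contribution can be absorbed via \eqref{InequalityHessian}. The main obstacle, in my view, is the coordinated bookkeeping: the Hessian term, the $|\nabla|\nabla v|^2|^2$ term, the boundary contribution and the two Young splittings all draw on the same coercive pool, so the parameter $\eta$, the Young constants and the conjugate exponents must be chosen together in such a way that the residual coefficient of $\int|\nabla v|^{2p-2}|D^2v|^2$ equals exactly $p(\chi^2/\gamma)^p$ and the gradient exponent surviving on the right stays strictly below the critical threshold $2(p+1)$.
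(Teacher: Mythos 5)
Your proposal is correct and follows essentially the same route as the paper: the Bochner identity tested against $|\nabla v|^{2p-2}$, the boundary term controlled by \eqref{NoConvexity} together with the bound $\int_\Omega|\nabla v|^2\leq c_0$ from \eqref{Cg}, absorption of the cross term via \eqref{InequalityLaplacian} and Young into the Hessian and $|\nabla|\nabla v|^2|^2$ reservoirs, and finally the double Young step on $\int_\Omega u^{2\alpha}|\nabla v|^{2p-2}$ with the exponent check $\frac{2(p-1)(p+l)}{p+l-2\alpha}<2(p+1)$. Your verification of that exponent inequality (via $\alpha(p+1)<p+l$) and the remark that for $l=1$ one simply fixes $\epsilon_3$ to the explicit value in the statement both match the paper's argument.
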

\begin{lemma}\label{LemmaAbsorptiveMainInequality}
	Let $n\geq 1$, $l=1$ and  the hypotheses of Lemma \ref{LocalExistenceLemma} be satisfied. Then, for any $p \in (1, \infty)$ there exists $\tilde{C}(p,n)\geq 0$ such that for all $\xi>0$ fulfilling 
	\begin{equation}\label{AssumptionOnXi}
	\xi > \left(\frac{4 \tilde{C}(p,n) \|\chi v_0\|_{L^{\infty}(\Omega)}^2}{p}\right)^{\frac{1}{p}}, 
	\end{equation}
	the following holds true: For some $L>0$ the $u$-component of the local solution $(u,v,w)$ to problem \eqref{problem} complies with 
	\begin{equation*}
	\int_\Omega u^p \leq L \quad \textrm{for all}\quad t \in (0,T_{max}).
	\end{equation*}
	Additionally, the same conclusion is valid whenever $n\geq 1$, $l>1$, $\xi>0$ and all $p\in (\max\{l,\frac{l(nl-2)}{n}\},\infty).$ 
\begin{proof}
When $l=1$, Lemma \ref{Estim_general_For_u^pLemma}, Lemma \ref{Estim_general_For_nablav^2pLemma}  and relation \eqref{InequalityHessian}, supported by the bound for $v$ in \eqref{Cg}, imply that
\begin{equation}\label{Somma}
\begin{split}
&\frac{d}{dt} \left(\int_\Omega u^p + \left(\frac{\chi^2}{\gamma}\right)^p \int_\Omega  |\nabla v|^{2p}\right) + \frac{2(p-1)}{p}\int_\Omega |\nabla u^{\frac{p}{2}}|^2 
+p \left(\frac{\chi^2}{\gamma}\right)^p \int_\Omega |\nabla v|^{2p-2} |D^2v|^2\\
&\leq \left(\frac{p}{8(4p^2+n)\|v\|_{L^{\infty}(\Omega)}^2}\left(\frac{\chi^2}{\gamma}\right)^p+ \frac{\chi^2 p(p-1)}{2(p+1)} \left(\frac{\xi \gamma (p+1)}{2p^2\chi^2}\right)^{-p}\right) 
\int_\Omega |\nabla v|^{2(p+1)}+ c_{26}\\
& \leq \left(\frac{p}{8(4p^2+n)\|v\|_{L^{\infty}(\Omega)}^2} \left(\frac{\chi^2}{\gamma}\right)^p + \frac{\chi^2 p(p-1)}{2(p+1)} \left(\frac{\xi \gamma (p+1)}{2p^2\chi^2}\right)^{-p}\right) 
2 (4p^2+n)\lVert v_0\Vert^2_{L^\infty(\Omega)}\int_\Omega \lvert \nabla  v\rvert^{2p-2} \lvert D^2 v\rvert^2 + c_{26}\\
&= \left(\frac{\chi^2}{\gamma}\right)^p \left(\frac{p}{4} +  \frac{\tilde{C}(p,n)}{\xi^p} \|\chi v_0\|_{L^{\infty}(\Omega)}^2 \right)
\int_\Omega \lvert \nabla  v\rvert^{2p-2} \lvert D^2 v\rvert^2 + c_{26} \quad \text{ on } (0, T_{max}),
\end{split}
\end{equation}
where $c_{26} >0$ and 
$$
\tilde{C}(p,n)=
\begin{cases}
0 & n\in \{1,2\},\\
\frac{2^{p} p^{2p+1}(p-1)(4p^2+n)}{(p+1)^{p+1}}& n\geq 3. 
\end{cases}
$$
 Since by our assumptions $\xi$ satisfies restriction \eqref{AssumptionOnXi}, we get that 
$\frac{\tilde{C}(p,n)}{\xi^p} \|\chi v_0\|_{L^{\infty}(\Omega)}^2 < \frac{p}{4}$; henceforth relation \eqref{Somma} actually reads 
\[
\frac{d}{dt} \left(\int_\Omega u^p + \left(\frac{\chi^2}{\gamma}\right)^p \int_\Omega  |\nabla v|^{2p}\right) + \frac{2(p-1)}{p}\int_\Omega |\nabla u^{\frac{p}{2}}|^2 
+ \frac{p}{2} \left(\frac{\chi^2}{\gamma}\right)^p  \int_\Omega |\nabla v|^{2p-2} |D^2v|^2 \leq c_{26}\quad \textrm{on } (0,\TM).
\]
On the other hand, from inequality \eqref{InequalityGradienHessian} we have
\[\vert \nabla \lvert \nabla v\rvert^p\rvert^2=\frac{p^2}{4}\lvert \nabla v \rvert^{2p-4}\vert \nabla \lvert \nabla v\rvert^2\rvert^2=p^2\lvert \nabla v \rvert^{2p-4}\lvert D^2v \nabla v \rvert^2\leq p^2|\nabla v|^{2p-2} |D^2v|^2,\]
so that we obtain 
\begin{equation}\label{Estim_general_For_y_2}
y'(t)+\frac{2(p-1)}{p}\int_\Omega \lvert \nabla u^\frac{p}{2}\rvert^2+\frac{1}{2p} \left(\frac{\chi^2}{\gamma}\right)^p \int_\Omega \vert \nabla \lvert \nabla v\rvert^p\rvert^2 \leq c_{26}
\quad \text{ on } (0, T_{max}).
\end{equation}
Conversely, for $l>1$, by relying again on  Lemma \ref{Estim_general_For_u^pLemma}, Lemma \ref{Estim_general_For_nablav^2pLemma}, any $\epsilon>0$ and some $c_{27}>0$ entail 
\begin{equation*}
\begin{split}
\frac{d}{dt} \left(\int_\Omega u^p + \left(\frac{\chi^2}{\gamma}\right)^p \int_\Omega  |\nabla v|^{2p}\right) & +\left[\frac{2(p-1)}{p}- \epsilon_1\right]\int_\Omega |\nabla u^{\frac{p}{2}}|^2 + p \left(\frac{\chi^2}{\gamma}\right)^p \int_\Omega |\nabla v|^{2p-2} |D^2v|^2
\leq  \\ & 
\frac{\epsilon p (\frac{\chi^2}{\gamma})^p}{2 (4p^2+n)\lVert v_0\Vert^2_{L^\infty(\Omega)}} \int_\Omega |\nabla v|^{2 (p+1)}+ c_{27}\quad \text{for all } t\in (0,\TM),
\end{split}
\end{equation*}
which similarly to what has been previously done, by choosing $\epsilon_1 \in \left(0, \frac{2(p-1)}{p}\right)$ and $\epsilon\in \left(0, 1\right)$, we obtain an absorptive inequality similar to \eqref{Estim_general_For_y_2}; then, both can be unified  for suitable positive constants $a,b,c$ as
\begin{equation}\label{Estim_general_For_y_2_general}
y'(t)+a\int_\Omega \lvert \nabla u^\frac{p}{2}\rvert^2+b \int_\Omega \vert \nabla \lvert \nabla v\rvert^p\rvert^2\leq  c
\quad \text{ on } (0, T_{max}).
\end{equation}
Successively, for any $l\geq 1,$ by exploiting again the Gagliardo--Nirenberg inequality, there exists a positive constant $c_{28}$ such that
\begin{equation*} 
\int_\Omega u^{p}=\lvert \lvert u^\frac{p}{2}\lvert \lvert_{L^2(\Omega)}^2 \leq c_{28}  \lvert \lvert\nabla u^\frac{p}{2}\lvert \lvert_{L^2(\Omega)}^{2 \theta} \lvert \lvert u^\frac{p}{2}\lvert \lvert_{L^\frac{2}{p}(\Omega)}^{2(1-\theta)} +c_{28} \lvert \lvert u^\frac{p}{2}\lvert \lvert^2_{L^\frac{2}{p}(\Omega)} \quad \text{ for all } t \in(0,T_{max}),
 \end{equation*}
 with
\[0<\theta=\frac{\frac{np}{2}(1-\frac{1}{p})}{1-\frac{n}{2}+\frac{np}{2}}<1.\]
Taking into consideration bound \eqref{massConservation} and introducing $c_{29}>0$, the two above inequalities lead to 
\begin{equation}\label{Estim_utopGaglNiren}
\int_\Omega u^{p}\leq c_{29}\Big(\int_\Omega \lvert \nabla u^\frac{p}{2}\rvert^2\Big)^{\theta}+c_{29} \quad \textrm{on } \, (0,T_{max}).
\end{equation}
In a similar way, another application of the Gagliardo--Nirenberg produces some  $c_{30}>0$ such that 
\begin{equation*}
\int_\Omega \lvert \nabla v\rvert^{2p}=\lvert \lvert \lvert \nabla v\rvert^p\lvert \lvert_{L^2(\Omega)}^2 
\leq c_{30} \lvert \lvert\nabla  \lvert \nabla v \rvert^p\rvert \lvert_{L^2(\Omega)}^{2\theta} \lvert \lvert\lvert \nabla v \rvert^p\lvert \lvert_{L^\frac{2}{p}(\Omega)}^{2(1-\theta)} +c_{30}      \lvert \lvert \lvert \nabla v \rvert^p\lvert \lvert^2_{L^\frac{2}{p}(\Omega)}\quad \textrm{with } t\in (0,\TM).
 \end{equation*}
Successively, by relying on the bound for $\nabla v$ in \eqref{Cg}, we have for $c_{31}>0$
\begin{equation}\label{Estim_Nabla nabla v^p^2}
\int_\Omega \lvert \nabla v\rvert^{2p}\leq c_{31} \Big(\int_\Omega \lvert \nabla \lvert \nabla v \rvert^p\rvert^2\Big)^{\theta}+c_{31} \quad \textrm{with } t \in (0,T_{max}).
\end{equation}
As a consequence of all of the above, by manipulating inequalities \eqref{Estim_utopGaglNiren} and \eqref{Estim_Nabla nabla v^p^2} and successively using the results into \eqref{Estim_general_For_y_2_general}, we can observe also by virtue of \eqref{InequalityForFinallConclusion} that $y$ satisfies this initial problem
\begin{equation*}\label{MainInitialProblemWithM}
\begin{cases}
y'(t)\leq c_{32}-c_{33} y^{\frac{1}{\theta}}(t)\quad \textrm{for all } t \in (0,T_{max}),\\
y(0)=\int_\Omega u_0^p+ (\frac{\chi^2}{\gamma})^p \int_\Omega |\nabla v_0|^{2p}, 
\end{cases}
\end{equation*}
with $c_{32}, c_{33}$ positive constants. Consequently, an ODE comparison principle implies that 
$\int_\Omega u^p\leq y(t)\leq \max\{y(0),\big(\frac{c_{32}}{c_{33}}\big)^{\theta}\}:=L$ for all $t\in(0,T_{max})$.
\end{proof}
\end{lemma}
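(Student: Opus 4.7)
My plan is to add the two differential inequalities supplied by Lemma \ref{Estim_general_For_u^pLemma} and Lemma \ref{Estim_general_For_nablav^2pLemma}, thereby producing a single master estimate for the functional $y(t) := \int_\Omega u^p + (\chi^2/\gamma)^p \int_\Omega |\nabla v|^{2p}$, and then to close it as an autonomous absorptive ODI by invoking Gagliardo--Nirenberg interpolation against the a priori information from Lemma \ref{LocalExistenceLemma}.

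For the case $l=1$, the good term $-\frac{\xi\gamma(p-1)}{4}\int u^{p+1}$ of Lemma \ref{Estim_general_For_u^pLemma} is engineered to cancel exactly the bad $+\frac{\xi\gamma(p-1)}{4}\int u^{p+1}$ of Lemma \ref{Estim_general_For_nablav^2pLemma}, so the sum reduces to a bound involving only the dissipative terms $\frac{2(p-1)}{p}\int|\nabla u^{p/2}|^2 + p(\chi^2/\gamma)^p\int|\nabla v|^{2p-2}|D^2v|^2$ on the LHS and a single bad term proportional to $\int|\nabla v|^{2(p+1)}$ on the RHS. Invoking inequality \eqref{InequalityHessian} (with the uniform bound $v \le \|v_0\|_{L^\infty(\Omega)}$ from \eqref{Cg}) I then rewrite $\int|\nabla v|^{2(p+1)} \le 2(4p^2+n)\|v_0\|_{L^\infty}^2 \int|\nabla v|^{2p-2}|D^2v|^2$, so that the RHS becomes a multiple of the dissipative $|D^2v|^2$--term already present on the LHS. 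The multiplicative coefficient splits into a fixed piece $\frac{p}{4}(\chi^2/\gamma)^p$ (coming from Lemma \ref{Estim_general_For_nablav^2pLemma}) plus an $\xi$--dependent piece of the form $\tilde C(p,n)\|\chi v_0\|_{L^\infty}^2/\xi^p$ (coming from Lemma \ref{Estim_general_For_u^pLemma}); condition \eqref{AssumptionOnXi} is designed precisely to force this second piece below $p/4$, so that absorption into $p(\chi^2/\gamma)^p\int|\nabla v|^{2p-2}|D^2v|^2$ leaves a strictly positive residue. For the case $l>1$ the argument is identical in spirit but technically easier: the dominant absorbing terms are $\int u^{p+l}$ from both lemmas, and the $\epsilon_1,\epsilon_2,\epsilon_3$ parameters are chosen small enough to make all bad contributions controllable without any size constraint on $\xi$.

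Having thus obtained $y'(t) + a\int|\nabla u^{p/2}|^2 + b\int|\nabla v|^{2p-2}|D^2v|^2 \le c$, I would apply \eqref{InequalityGradienHessian} to convert the Hessian term into $|\nabla|\nabla v|^p|^2$ up to the factor $p^2$, yielding
\[
y'(t) + a\int_\Omega|\nabla u^{p/2}|^2 + \tilde b\int_\Omega|\nabla|\nabla v|^p|^2 \le c \quad\text{on } (0,T_{max}).
\]
Then Gagliardo--Nirenberg provides two interpolation bounds with the \emph{same} exponent $\theta = \frac{n(p-1)}{n(p-1)+2}\in(0,1)$: for the $u$--component, $\int u^p = \|u^{p/2}\|_{L^2}^2$ interpolates between $\|\nabla u^{p/2}\|_{L^2}$ and $\|u^{p/2}\|_{L^{2/p}}$, the latter being bounded by the mass conservation \eqref{massConservation}; and for the $v$--component, $\int|\nabla v|^{2p} = \||\nabla v|^p\|_{L^2}^2$ interpolates between $\|\nabla|\nabla v|^p\|_{L^2}$ and $\||\nabla v|^p\|_{L^{2/p}}$, the latter being controlled by the $L^2$--bound on $\nabla v$ in \eqref{Cg}. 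Inverting these interpolations and combining them through inequality \eqref{InequalityForFinallConclusion} with exponent $1/\theta$, I obtain an autonomous ODI of the form $y'(t) \le c_{32} - c_{33}\, y(t)^{1/\theta}$, and an elementary ODE comparison yields $y(t) \le \max\{y(0),(c_{32}/c_{33})^\theta\} =: L$, which a fortiori bounds $\int_\Omega u^p$.

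The main obstacle I expect is the bookkeeping in the absorption step: every coefficient arising from Young's inequalities in Lemmas \ref{Estim_general_For_u^pLemma}--\ref{Estim_general_For_nablav^2pLemma} must be balanced so that the exponents of $\chi$, $\xi$, $\gamma$ and $\|v_0\|_{L^\infty}$ in the final threshold assemble into the clean factor $\|\chi v_0\|_{L^\infty}^2/\xi^p$. In particular, tracking the precise constant $\tilde C(p,n)$ (and verifying that it may be set to $0$ when $n\in\{1,2\}$) is where care is needed, since this is the source of the dichotomy between the unconditional bound in low dimensions and the explicit largeness condition on $\xi$ in $n\ge 3$ stated in Theorem \ref{MainTheorem}.
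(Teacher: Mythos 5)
Your proposal is correct and follows essentially the same route as the paper's own proof: summing the two lemmas so the $\int_\Omega u^{p+1}$ (resp. $\int_\Omega u^{p+l}$) terms cancel, absorbing $\int_\Omega|\nabla v|^{2(p+1)}$ into the Hessian dissipation via \eqref{InequalityHessian} under condition \eqref{AssumptionOnXi}, passing to $\int_\Omega|\nabla|\nabla v|^p|^2$ through \eqref{InequalityGradienHessian}, and closing with the two Gagliardo--Nirenberg interpolations (your exponent $\theta=\frac{n(p-1)}{n(p-1)+2}$ agrees with the paper's), \eqref{InequalityForFinallConclusion}, and ODE comparison. No substantive differences to report.
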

Now we have all the necessary tools to conclude.
\subsubsection*{{\bf{Proof of Theorems \ref{MainTheorem} and \ref{Main1Theorem}}}} 
For $l=1$, let $\tilde{C}(p,n)$ be the constant defined in Lemma \ref{LemmaAbsorptiveMainInequality} and let us set 
\[
C(n)=\begin{cases}
0 & \text{if } n\in\{1,2\},\\
(\frac{8}{n}\tilde{C}(n/2,n))^\frac{2}{n} & \text{if } n\geq 3.
\end{cases}
\]	
From our hypotheses, $\xi>{C}(n)\|\chi  v_0\|_{L^{\infty}(\Omega)}^\frac{4}{n}$, so that from continuity arguments we can always pick  $p>\max\{1,\frac{n}{2}\}$ such that assumption \eqref{AssumptionOnXi} holds true. Henceforth, Lemma \ref{LemmaAbsorptiveMainInequality} ensures that the $u$-component of the local solution $(u,v,w)$ to problem \eqref{problem} belongs to $L^{\infty}((0,T_{max}); L^p(\Omega))$; since $l=1$ also $g \in L^{\infty}((0,T_{max}); L^p(\Omega))$ and
the claim follows by invoking Lemma \ref{FromLocalToGLobalBoundedLemma}. Indeed, for any $l>1$, upon enlarging $p$ in the same Lemma  \ref{LemmaAbsorptiveMainInequality}, we also can have $u,g \in L^{\infty}((0,T_{max}); L^p(\Omega))$ for $p>\max\{1,\frac{n}{2}\}$, and identically conclude.
\qed
\section{Logistic source vs. chemorepellent in chemotaxis-consumption models: Which one is more effective toward boundedness? }\label{SectionComparison}
We complement this research by discussing some differences and analogies between a chemotaxis-consumption model with logistic source and that presented here with chemorepellent (linearly produced). To be precise, when the equation for $u$ in problem \eqref{problem} is replaced by \eqref{EquationWithLogistic}, and we set $f(u)=u$ in that for $v$, the chemotaxis-consumption model with logistic source (indicated with $P_\mu$ below) is obtained, and in \cite{LankeitWangConsumptLogistic} boundedness of solutions is established for $\mu$ large with respect to $\chi  \lVert v_0 \rVert_{L^\infty(\Omega)}$. Conversely, for our investigated attraction-repulsion model $P_\xi$ (to facilitate the comparison, we also re-write it next to $P_\mu$) an analogous largeness restriction is moved to the parameter $\xi$:  
\begin{equation*}
P_\mu:\;
\begin{cases}
u_t= \Delta u - \chi \nabla \cdot (u \nabla v)+k u -\mu u^2 \\
v_t=\Delta v-uv 
\end{cases}
\quad \text{and}\quad 
P_\xi:\,
\begin{cases}
u_t= \Delta u - \chi \nabla \cdot (u \nabla v)+\xi \nabla \cdot (u \nabla w) \\
v_t=\Delta v-f(u)v  \\
0= \Delta w - \delta w + \gamma u
\end{cases}
.
\end{equation*}
In particular, some straightforward computations, show that the condition in \cite[Theorem 1.1]{LankeitWangConsumptLogistic} reads
	\begin{equation*}
	\begin{split}
	\mu&>\frac{4^{1/n} (n-1) n}{n+1} \left(\frac{(n-1)(4 n^2+n)}{n+1}\right)^{1/n}\l \Vert \chi   v_0 \rVert_{L^\infty(\Omega)}^\frac{2}{n}\\& 
	\quad +\frac{2^{\frac{n-1}{2}+n+1} (2 n-1)}{n+1} \left(\frac{(n-1) (2 n-1)(4 n^2+n)}{n+1}\right)^{\frac{n-1}{2}}\lVert \chi v_0 \rVert_{L^\infty(\Omega)}^{2n}
	=:C_\mu(\chi \lVert v_0 \rVert_{L^\infty(\Omega)}),
	\end{split}
	\end{equation*}
	whereas in Theorem \ref{MainTheorem} the correlated assumption appears as 
	$$
	\xi>\left(2^{2-\frac{n}{2}} \left(\frac{n}{2}-1\right) \left(\frac{n}{2}+1\right)^{-\frac{n}{2}-1} n^n \left(n^2+n\right)\right)^{2/n} \lVert \chi v_0 \rVert_{L^\infty(\Omega)}^\frac{2}{n}=:C_\xi(\chi \lVert v_0 \rVert_{L^\infty(\Omega)}).
	$$
	Even though a very direct comparison between models $P_\mu$ and $P_\xi$ is not strictly possible, from Figure \ref{ComparisonConditionsLogisticRepulsionFigure} it can be observed that quantitatively  $C_\mu(\chi \lVert v_0 \rVert_{L^\infty(\Omega)})>C_\xi(\chi \lVert v_0 \rVert_{L^\infty(\Omega)})$, for any value of $\chi \lVert v_0 \rVert_{L^\infty(\Omega)}$.
	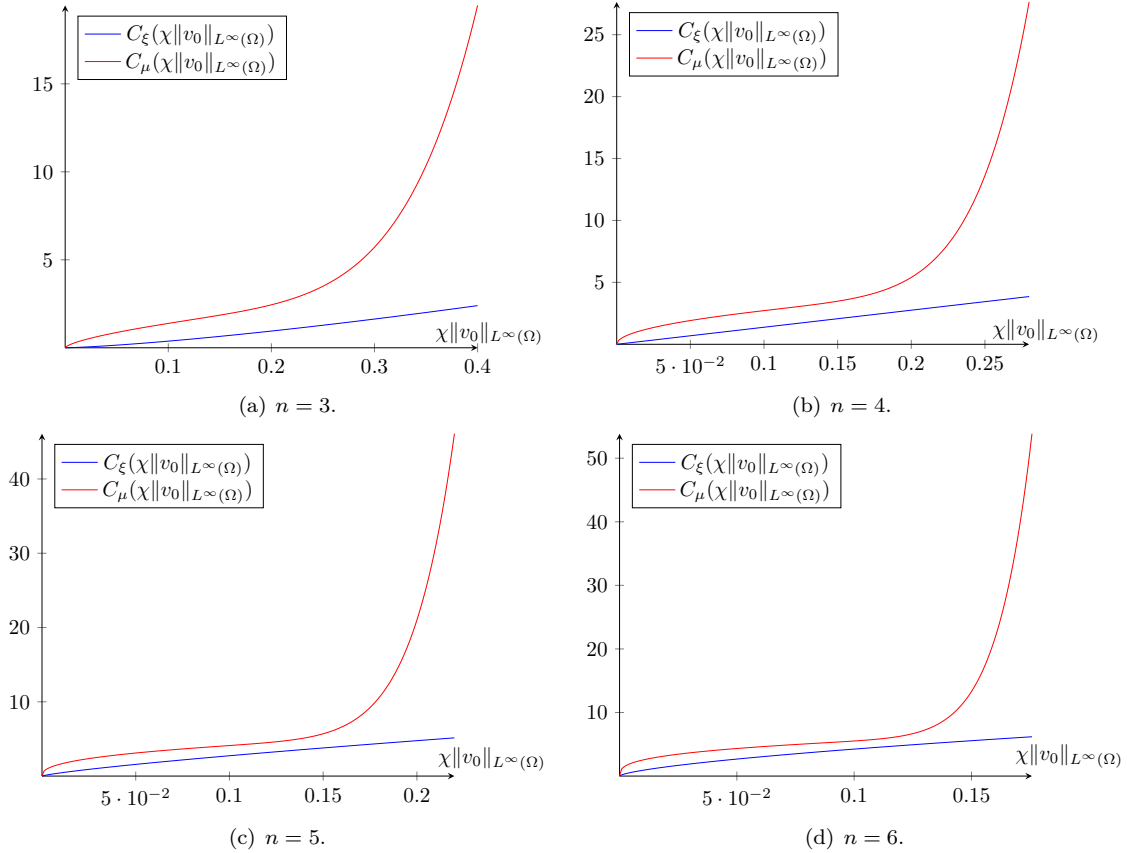
\begin{figure}[h!]  
		\centering  
		\subfigure[$n=3.$]  
		{  
			\begin{tikzpicture}[scale=0.8]
			\begin{axis}[
			axis lines=middle,
			clip=false,
			ymin=0,
			legend pos=north west
			]
			\addplot+[mark=none,samples=200,domain=0:0.4] {36/5 *(6/5)^(2/3)*x^(4/3)};
			\addplot+[mark=none,samples=200,domain=0:0.4] {(3*39^(1/3)*x^(2/3))*2^(-2/3) + 3900*x^6};
			\legend{$C_\xi(\chi \lVert v_0 \rVert_{L^\infty(\Omega)})$};
			\addlegendentry{$C_\mu(\chi \lVert v_0 \rVert_{L^\infty(\Omega)})$};
			\draw[fill] (axis cs:0.352,1.9) node[below right] {$\chi \lVert v_0 \rVert_{L^\infty(\Omega)}$};
			\end{axis}
			\end{tikzpicture}
		}  
		\subfigure[$n=4.$]  
		{  
			\begin{tikzpicture}[scale=0.8]
			\begin{axis}[
			axis lines=middle,
			clip=false,
			ymin=0,
			legend pos=north west
			]
			\addplot+[mark=none,samples=200,domain=0:0.28] {32/3* (5/3)^(1/2)*x};
			\addplot+[mark=none,samples=200,domain=0:0.28] {(24/5)*(51/5)^(1/4)*(x)^(1/2)  + 1279488/25*(714/5)^(1/2)*x^8};
			\legend{$C_\xi(\chi \lVert v_0 \rVert_{L^\infty(\Omega)})$};
			\addlegendentry{$C_\mu(\chi \lVert v_0 \rVert_{L^\infty(\Omega)})$};
			\draw[fill] (axis cs:0.25,2.654) node[below right] {$\chi \lVert v_0 \rVert_{L^\infty(\Omega)}$};
			\end{axis}
			\end{tikzpicture}
			
		}  
		\subfigure[$n=5.$]  
		{  
			
			\begin{tikzpicture}[scale=0.8]
			\begin{axis}[
			axis lines=middle,
			clip=false,
			ymin=0,
			legend pos=north west
			]
			\addplot+[mark=none,samples=200,domain=0:0.22] {50/7*(5/7)^(2/5)*2^(1/5)* 3^(4/5)*x^(4/5)};
			\addplot+[mark=none,samples=200,domain=0:0.22] {10/3*2^(3/5)*35^(1/5)*x^(2/5) + 152409600*x^10};
			\legend{$C_\xi(\chi \lVert v_0 \rVert_{L^\infty(\Omega)})$};
			\addlegendentry{$C_\mu(\chi \lVert v_0 \rVert_{L^\infty(\Omega)})$};
			\draw[fill] (axis cs:0.2074,4.9) node[below right] {$\chi \lVert v_0 \rVert_{L^\infty(\Omega)}$};
			\end{axis}
			\end{tikzpicture}
		}
		\subfigure[$n=6.$]  
		{  
			\begin{tikzpicture}[scale=0.8]
			\begin{axis}[
			axis lines=middle,
			clip=false,
			ymin=0,
			 xtick={0,0.05,...,1},
			legend pos=north west
			]
			\addplot+[mark=none,samples=200,domain=0:0.1758] {9*(21/2)^(1/3)*x^(2/3)};
			\addplot+[mark=none,samples=200,domain=0:0.1758] {30/7*(3/7)^(1/6)*(10)^(1/2) *x^(1/3) + 3833280000000/343*(165/7)^(1/2)*x^12};
			\legend{$C_\xi(\chi \lVert v_0 \rVert_{L^\infty(\Omega)})$};
			\addlegendentry{$C_\mu(\chi \lVert v_0 \rVert_{L^\infty(\Omega)})$};
			\draw[fill] (axis cs:0.1656,6.21) node[below right] {$\chi \lVert v_0 \rVert_{L^\infty(\Omega)}$};
			\end{axis}
			\end{tikzpicture}
		}  
		\caption{Illustration comparing for $n\in\{3,4,5,6\}$ the graphs of the functions $C_\mu(\chi \lVert v_0 \rVert_{L^\infty(\Omega)})$ and $C_\xi(\chi \lVert v_0 \rVert_{L^\infty(\Omega)})$ computed in \cite[Theorem 1.1]{LankeitWangConsumptLogistic} and in Theorem \ref{MainTheorem}, respectively.  }\label{ComparisonConditionsLogisticRepulsionFigure}
	\end{figure}

	For the sake of scientific clarity, we would like to stress that the curve trend of the function $C_\mu$ may be improved; this is essentially due to the fact that in \cite{LankeitWangConsumptLogistic}  the authors prove the deduction ``$L^p\Rightarrow L^\infty$'' for $p>n$, and not for $p>\frac{n}{2}$, as we performed in Lemma \ref{FromLocalToGLobalBoundedLemma}. (As known, in this context, $\frac{n}{2}$ is the smallest value toward the validity of the above implication.)	
In this sense, by adjusting to this choice of $p$ the expression of $C_\mu$, the situation is different. More precisely, for sufficiently large values of $\chi \lVert v_0 \rVert_{L^\infty(\Omega)}$, for which the analysis is more interesting, $C_\mu(\chi \lVert v_0 \rVert_{L^\infty(\Omega)})\gg C_\xi( \chi \lVert v_0 \rVert_{L^\infty(\Omega)})$; on the other hand, the same does not happen when $\chi \lVert v_0 \rVert_{L^\infty(\Omega)}$ is small. (See Figure \ref{ComparisonConditionsLogisticRepulsionFigureN-Mezzi}.) 
\begin{figure}[h!]  
	\centering  
		\subfigure[$n=3.$]  
	{  
		\begin{tikzpicture}[scale=0.8]
		\begin{axis}[
		axis lines=middle,
		clip=false,
		ymin=0,
		 xtick={0,0.13,...,1},
		legend pos=north west
		]
		\addplot+[mark=none,samples=200,domain=0:0.7] {(36/5)*(6/5)^(2/3)* x^(4/3)};
		\addplot+[mark=none,samples=200,domain=0:0.7] {6/5*(6/5)^(2/3)*x^(4/3)+56/5*(21/5)^(1/4)*x^3};
		\legend{$C_\xi(\chi \lVert v_0 \rVert_{L^\infty(\Omega)})$};
		\addlegendentry{$C_\mu(\chi \lVert v_0 \rVert_{L^\infty(\Omega)})$};
		\draw[fill] (axis cs:0.60,0.7) node[below right] {$\chi \lVert v_0 \rVert_{L^\infty(\Omega)}$};
		\draw[fill] (axis cs:{0.598,4.10}) circle [radius=1.7pt] node[above left] {};
		\draw [dashed] (axis cs:0.598,4.00) -- (axis cs:0.598,0);
		\draw[fill] (axis cs:0.555,-0.10) node[below right] {$\rho_0$};
		\end{axis}
		\end{tikzpicture}
	}  
	\subfigure[$n=4.$]  
	{  
		\begin{tikzpicture}[scale=0.8]
		\begin{axis}[
		axis lines=middle,
		clip=false,
		ymin=0,
		legend pos=north west
		]
		\addplot+[mark=none,samples=200,domain=0:0.6] {32/3 *(5/3)^(1/2)* x};
		\addplot+[mark=none,samples=200,domain=0:0.6] {8/3*(5/3)^(1/2)*x + 400/3*(2/3)^(1/2)*x^4};
		\legend{$C_\xi(\chi \lVert v_0 \rVert_{L^\infty(\Omega)})$};
		\addlegendentry{$C_\mu(\chi \lVert v_0 \rVert_{L^\infty(\Omega)})$};
		\draw[fill] (axis cs:0.468,1.9) node[below right] {$\chi \lVert v_0 \rVert_{L^\infty(\Omega)}$};
		\draw[fill] (axis cs:{0.456,6.32}) circle [radius=1.7pt] node[above left] {};
		\draw [dashed] (axis cs:0.456,6.50) -- (axis cs:0.456,0);
		\draw[fill] (axis cs:0.426,-0.3) node[below right] {$\rho_0$};
		\end{axis}
		\end{tikzpicture}
		
	}  
	\subfigure[$n=5.$]  
	{  
		
		\begin{tikzpicture}[scale=0.8]
		\begin{axis}[
		axis lines=middle,
		clip=false,
		ymin=0,
		legend pos=north west
		]
		\addplot+[mark=none,samples=200,domain=0:0.5] {50/7*(5/7)^(2/5)* 2^(1/5)*3^(4/5)*x^(4/5)};
		\addplot+[mark=none,samples=200,domain=0:0.5] {15/7*(5/7)^(2/5)*2^(1/5)*3^(4/5)*x^(4/5) + 624/7*2^(1/4)*(3)^(1/2)*(65/7)^(3/4)*x^5};
		\legend{$C_\xi(\chi \lVert v_0 \rVert_{L^\infty(\Omega)})$};
		\addlegendentry{$C_\mu(\chi \lVert v_0 \rVert_{L^\infty(\Omega)})$};
		\draw[fill] (axis cs:0.38,3.9) node[below right] {$\chi \lVert v_0 \rVert_{L^\infty(\Omega)}$};
		\draw[fill] (axis cs:{0.353,7.60}) circle [radius=1.7pt] node[above left] {};
		\draw [dashed] (axis cs:0.353,7.60) -- (axis cs:0.353,0);
		\draw[fill] (axis cs:0.325,-0.3) node[below right] {$\rho_0$};
		\end{axis}
		\end{tikzpicture}
	}
	\subfigure[$n=6.$]  
	{  
		\begin{tikzpicture}[scale=0.8]
		\begin{axis}[
		axis lines=middle,
		clip=false,
		ymin=0,
		 xtick={0,0.08,...,1},
		legend pos=north west
		]
		\addplot+[mark=none,samples=200,domain=0:0.4] {9*(21/2)^(1/3)* x^(2/3)};
		\addplot+[mark=none,samples=200,domain=0:0.4] {3*(21/2)^(1/3)*x^(2/3) + 10752*x^6};
		\legend{$C_\xi(\chi \lVert v_0 \rVert_{L^\infty(\Omega)})$};
		\addlegendentry{$C_\mu(\chi \lVert v_0 \rVert_{L^\infty(\Omega)})$};
		\draw[fill] (axis cs:0.288,5.8) node[below right] {$\chi \lVert v_0 \rVert_{L^\infty(\Omega)}$};
		\draw[fill] (axis cs:{0.285,8.40}) circle [radius=1.7pt] node[above left] {};
		\draw [dashed] (axis cs:0.285,7.9510) -- (axis cs:0.285,0);
		\draw[fill] (axis cs:0.266,-0.8) node[below right] {$\rho_0$};
		\end{axis}
		\end{tikzpicture}
	}  
\caption{Illustration comparing for $n\in\{3,4,5,6\}$ the graphs of the functions $C_\mu(\chi \lVert v_0 \rVert_{L^\infty(\Omega)})$ and 
$C_\xi(\chi \lVert v_0 \rVert_{L^\infty(\Omega)})$  computed in \cite[Theorem 1.1]{LankeitWangConsumptLogistic} and in Theorem \ref{MainTheorem}, both for $p=\frac{n}{2}$. The intersection point is such that the abscissa $\rho_0$ approaches 0 with $n$ increasing. For values of $\chi \lVert v_0 \rVert_{L^\infty(\Omega)}$ smaller than $\rho_0$, $C_\xi$ and $C_\mu$ are rather similar (quantitatively of the same order), whilst for larger values,  $C_\mu$ is much bigger than $C_\xi$. This phenomenon is even more perceptible in higher dimensions.}\label{ComparisonConditionsLogisticRepulsionFigureN-Mezzi}
\end{figure}

As a consequence, if we consider that for high values of the cell concentration in problem $P_\xi$ the chemoattractant is consumed with a weaker law than that in $P_\mu$ ($0<\alpha<\frac{1}{2}+\frac{1}{n}$ vs. $\alpha=1$, respectively), this discussion seems to indicate that the introduction in the classical Keller--Segel model with consumption \eqref{problemOriginalKSCosnumption} of a produced chemorepellent, has a more effective stabilizing impact on the cells' motility than the one resulting by the introduction of dampening logistic sources.
\subsubsection*{Acknowledgments}
The authors are members of the Gruppo Nazionale per l'Analisi Matematica, la Probabilit\`a e le loro Applicazioni (GNAMPA) of the Istituto Na\-zio\-na\-le di Alta Matematica (INdAM). GV is partially supported by the research projects \textit{Evolutive and stationary Partial Differential Equations with a focus on biomathematics}, funded by Fondazione di Sardegna (2019), and by MIUR (Italian Ministry of Education, University and Research) Prin 2017 \textit{Nonlinear Differential Problems via Variational, Topological and Set-valued Methods} (Grant Number: 2017AYM8XW). 


\begin{thebibliography}{10}
	
	\bibitem{BellomoEtAl}
	N.~Bellomo, A.~Bellouquid, Y.~Tao, and M.~Winkler.
	\newblock {Toward a mathematical theory of Keller--Segel models of pattern
		formation in biological tissues}.
	\newblock {\em Math. Models Methods Appl. Sci.}, 25(09):1663--1763, 2015.
	
	\bibitem{CiesWink}
	T.~Cie\'slak and M.~Winkler.
	\newblock Finite-time blow-up in a quasilinear system of chemotaxis.
	\newblock {\em Nonlinearity}, 21(5):1057--1076, 2008.
	
	\bibitem{GuoJiangZhengAttr-Rep}
	Q.~Guo, Z.~Jiang, and S.~Zheng.
	\newblock Critical mass for an attraction-repulsion chemotaxis system.
	\newblock {\em Appl. Anal.}, 97(13):2349--2354, 2018.
	
	\bibitem{HerreroVelazquez}
	M.~A. Herrero and J.~J.~L. Vel\'{a}zquez.
	\newblock A blow-up mechanism for a chemotaxis model.
	\newblock {\em Ann. Scuola Norm. Sup. Pisa Cl. Sci. (4)}, 24(4):633--683, 1997.
	
	\bibitem{HorstWink}
	D.~Horstmann and M.~Winkler.
	\newblock Boundedness vs. blow-up in a chemotaxis system.
	\newblock {\em J. Differerential Equations}, 215(1):52--107, 2005.
	
	\bibitem{JaLu}
	W.~J{\"a}ger and S.~Luckhaus.
	\newblock On explosions of solutions to a system of partial differential
	equations modelling chemotaxis.
	\newblock {\em Trans. Amer. Math. Soc.}, 329(2):819--824, 1992.
	
	\bibitem{Jameson_2014Inequality}
	G.~J.~O. Jameson.
	\newblock {Some inequalities for $(a+b)^p$ and $(a+b)^p+(a-b)^p$}.
	\newblock {\em Math. Gaz.}, 98(541):96--103, 2014.
	
	\bibitem{K-S-1970}
	E.~F. Keller and L.~A. Segel.
	\newblock Initiation of slime mold aggregation viewed as an instability.
	\newblock {\em J. Theoret. Biol.}, 26(3):399--415, 1970.
	
	\bibitem{Keller-1971-MC}
	E.~F. Keller and L.~A. Segel.
	\newblock {Model for chemotaxis.}
	\newblock {\em J. Theoret. Biol.}, 30(2):225--234, 1971.
	
	\bibitem{Keller-1971-TBC}
	E.~F. Keller and L.~A. Segel.
	\newblock {Traveling bands of chemotactic bacteria: A theoretical analysis}.
	\newblock {\em J. Theoret. Biol.}, 30(2):235, 1971.
	
	\bibitem{LankeitWangConsumptLogistic}
	J.~Lankeit and Y.~Wang.
	\newblock Global existence, boundedness and stabilization in a high-dimensional
	chemotaxis system with consumption.
	\newblock {\em Discrete Contin. Dyn. Syst.}, 37(12):6099--6121, 2017.
	
	\bibitem{LI-LiAttrRepuls}
	Y.~Li and Y.~Li.
	\newblock Blow-up of nonradial solutions to attraction-repulsion chemotaxis
	system in two dimensions.
	\newblock {\em Nonlinear Anal. Real World Appl.}, 30:170--183, 2016.
	
	\bibitem{LiuTaoFullyParNonlinearProd}
	D.-m. Liu and Y.-s. Tao.
	\newblock Boundedness in a chemotaxis system with nonlinear signal production.
	\newblock {\em Appl. Math. J. Chinese Univ. Ser. B}, 31(4):379--388, 2016.
	
	\bibitem{liu2020stabilization}
	J.~Liu.
	\newblock {Stabilization in a two-dimensional attraction-repulsion Stokes
		system with consumption of chemoattractant}.
	\newblock {\em Ann. Mat. Pura Appl.}, 1--32, 2020.
	
	\bibitem{Luca2003Alzheimer}
	M.~Luca, A.~Chavez-Ross, L.~Edelstein-Keshet, and A.~Mogilner.
	\newblock {Chemotactic signaling, microglia, and Alzheimer's disease senile
		plaques: Is there a connection?}
	\newblock {\em Bull. Math. Biol.}, 65(4):693--730, 2003.
	
	\bibitem{MarrasViglialoroMathNach}
	M.~Marras and G.~Viglialoro.
	\newblock Boundedness in a fully parabolic chemotaxis-consumption system with
	nonlinear diffusion and sensitivity, and logistic source.
	\newblock {\em Math. Nachr.}, 291(14--15):2318--2333, 2018.
	
	\bibitem{Nagai}
	T.~Nagai.
	\newblock Blowup of nonradial solutions to parabolic-elliptic systems modeling
	chemotaxis in two-dimensional domains.
	\newblock {\em J. Inequal. Appl.}, 6(1):37--55, 2001.
	
	\bibitem{Nirenber_GagNir_Ineque}
	L.~Nirenberg.
	\newblock On elliptic partial differential equations.
	\newblock {\em Ann. Scuola Norm. Sup. Pisa Cl. Sci. (3)}, 2(13):115--162, 1959.
	
	\bibitem{OsYagUnidim}
	K.~Osaki and A.~Yagi.
	\newblock {Finite dimensional attractor for one-dimensional Keller--Segel
		equations}.
	\newblock {\em Funkcial. Ekvacioj.}, 44(3):441--470, 2001.
	
	\bibitem{TaoBoun}
	Y.~Tao.
	\newblock Boundedness in a chemotaxis model with oxygen consumption by
	bacteria.
	\newblock {\em J. Math. Anal. Appl.}, 381(2):521--529, 2011.
	
	\bibitem{TaoWanM3ASAttrRep}
	Y.~Tao and Z.-A. Wang.
	\newblock Competing effects of attraction vs. repulsion in chemotaxis.
	\newblock {\em Math. Models Methods Appl. Sci.}, 23(1):1--36, 2013.
	
	\bibitem{VIGLIALORO-JMAA-BlowUp-Attr-Rep}
	G.~Viglialoro.
	\newblock Explicit lower bound of blow-up time for an attraction-repulsion
	chemotaxis system.
	\newblock {\em J. Math. Anal. App.}, 479(1):1069--1077, 2019.
	
	\bibitem{ViglialoroMatNacAttr-Repul}
	G.~Viglialoro.
	\newblock Influence of nonlinear production on the global solvability of an
	attraction-repulsion chemotaxis system.
	\newblock Preprint.
	
	\bibitem{ViglialoroWoolleyAplAnal}
	G.~Viglialoro and T.~E. Woolley.
	\newblock Solvability of a {Keller--Segel} system with signal-dependent
	sensitivity and essentially sublinear production.
	\newblock {\em Appl. Anal.}, doi: 10.1080/00036811.2019.1569227, 2019.
	
	\bibitem{WinklAggre}
	M.~Winkler.
	\newblock {Aggregation vs. global diffusive behavior in the higher-dimensional
		Keller--Segel model}.
	\newblock {\em J. Differerential Equations}, 248(12):2889--2905, 2010.
	
	\bibitem{WinklerN-Sto_CPDE}
	M.~Winkler.
	\newblock {Global large-data solutions in a chemotaxis-(Navier--)Stokes system
		modeling cellular swimming in fluid drops}.
	\newblock {\em Comm. Partial Differential Equations}, 37(2):319--351, 2012.
	
	\bibitem{WinklerN-Sto_2d}
	M.~Winkler.
	\newblock { Stabilization in a two-dimensional chemotaxis-Navier--Stokes
		system}.
	\newblock {\em Arch. Ration. Mech. Anal.}, 212(2):455--487, 2014.
	
	\bibitem{WinklerHowFar}
	M.~Winkler.
	\newblock How far can chemotactic cross-diffusion enforce exceeding carrying
	capacities?
	\newblock {\em J. Nonlinear. Sci.}, 24(5):809--855, 2014.
	
	\bibitem{WinklerNoNLinearanalysisSublinearProduction}
	M.~Winkler.
	\newblock A critical blow-up exponent in a chemotaxis system with nonlinear
	signal production.
	\newblock {\em Nonlinearity}, 31(5):2031--2056, 2018.
	
	\bibitem{YUGUOZHENG-Attr-Repul}
	H.~Yu, Q.~Guo, and S.~Zheng.
	\newblock Finite time blow-up of nonradial solutions in an attraction-repulsion
	chemotaxis system.
	\newblock {\em Nonlinear Anal. Real World Appl.}, 34:335--342, 2017.
	
\end{thebibliography}
\end{document}